\newtheorem{thm}{Theorem}[section]
\newtheorem{cor}[thm]{Corollary}
\theoremstyle{definition}
\newtheorem{definition}[thm]{Definition}
\theoremstyle{remark}
\newtheorem{rem}[thm]{Remark}
\numberwithin{claim}{thm}
\numberwithin{equation}{section}
\begin{document}

\title[Natural mates of non-null Frenet curves in the Minkowski 3-space]
{Natural mates of non-null Frenet curves in the Minkowski 3-space}

\author{Alev Kelleci}
\address{F\i rat University, Faculty of Science, Department of Mathematics, 23200 Elaz\i\u g, Turkey.}
\email{alevkelleci@hotmail.com}

\subjclass[2010]{Primary 53B25, Secondary 53A35, 53C50}
\keywords{Non-null curve, rectifying curves, natural mate, spherical curves, slant helix }

\begin{abstract}
In this paper, we give the definition of the natural mate of a non-null Frenet curve in Minkowski 3-spaces. The main purpose of this paper is to prove some relationships between a non-null Frenet curve and its natural mate. In particular, we obtain some necessary and sufficient conditions for the natural mate of a non-null Frenet curve to be a slant helix, a spherical curve, or a curve of constant curvature. Several applications of our main results are also presented. 
\end{abstract}

\maketitle

\section{Introduction}\label{sec:1}

In the study of the fundamental theory and the characterizations of space curves, the corresponding 
relations between the curves are the very interesting and important problem not only in 
Euclidean space but also Lorentzian space. One of the most popular among these curves is Frenet curve. 
If a curve $\gamma$ is a Frenet curve, then its curvature $\kappa > 0$ and torsion $\tau \neq 0$. 
Some important kinds of these curves are slant helices, spherical curves, rectifying curves, 
characterized in []. Since there exist three kinds of curves (time-like, space-like, and null 
or light-like curves) depending on their causal characters in a Lorentzian space, working in 
Lorentzian space is more complicated than working in Euclidean space. Also, it is well-known that
 the studies of space-like curves and time-like curves have many analogies and similarities because
 they have the natural geometric invariant parameter by the arc-length parameter which normalizes 
the tangent vector, \cite{ONeillKitap}. 

Recently, the theory of the associated curve of a given curve has been one of interesting topics.
 Many geometers have investigated this problem from different viewpoints: For example, the Bertrand 
partner and the Mannheim partner curves in Minkowski 3-space, are two important types of associated
 curves, characterized by the curvature and the torsion, (see in \cite{LW2008,BBE2004,CKK2012,CKK2013}).
 In \cite{CKA2012}, the authors studied also the general helices and the slant helices in 
Minkowski 3-spaces, by using some special associated curves of a given curve. Subsequently, they
studied the Euclidean version in \cite{CK2012}. They called the special associated curve
the principal-directional (-donor) curve and the binormal-directional (-donor)
curve. These notions gave us a certain method constructing the general helices
and the slant helices (see \cite{CK2012,CKA2012}). In \cite{DCA2018}, Deshmukh, Chen and Alghanemi 
studied a new type associated curve called as the natural mate of a Frenet curve in Euclidean 3-space, 
closely related with the principal-directional (-donor) curve defined in \cite{CK2012,CKA2012}. 
They characterized these curves and also gave new results for them. In this paper, we will define 
the concept of natural mate for non-null Frenet curves in Minkowski 3-space by moving 
from the notion of natural mate defined in \cite{DCA2018}. We have also shown that the natural
 mate of non-null Frenet curves is a unique and obtained some characterizations for them.  

All geometric objects under consideration are smooth and curves are regular
unless otherwise stated.

\section{Preliminaries}\label{SectPre}
In this section, we would like to give a brief summary of basic definitions, facts and equations in the theory of curves and surfaces in Minkowski $3$-space (see for detail, \cite{ONeillKitap,Spivak,Kuhnel}).

Let $\mathbb E^3_1$ denote the Minkowski $3$-space with the canonical
Lorentzian metric tensor given by
$$
\left\langle \cdot,\cdot \right\rangle= dx_1^2+dx_2^2-dx_3^2,
$$
where $\left(x_1, x_2, x_3\right)$  are rectangular coordinates of the points of $\mathbb E^3_1$.

The causality of a vector in a Minkowski space is defined as following.
A non-zero vector $v$ in $\mathbb{E}^3_1$ is said to be space-like, time-like
and light-like (null) regarding to $\left\langle v,v\right\rangle >0$ , $%
\left\langle v,v\right\rangle <0$ and $\left\langle v,v\right\rangle =0$,
respectively. We consider the zero vector as a space-like vector. 
Note that $v$ is said to be causal if it is not space-like. Two non-zero vectors 
$u$ and $v$ in $\mathbb{E}^3_1$ are said to be orthogonal if $\left\langle u,v\right\rangle=0$.
 A set of $\left\{e_1,e_2,e_3\right\}$ of vectors in $\mathbb{E}^3_1$ is called an orthonormal 
frame if it satisfies that 
$$
\left\langle e_1,e_1\right\rangle=-1, \quad \left\langle e_2,e_2\right\rangle=\left\langle e_3,e_3\right\rangle=1, %
 \quad \left\langle e_i,e_j\right\rangle=0, i\neq j.
$$
For two non-zero vectors $u=\left(u_1,u_2,u_3\right)$ and $v=\left(v_1,v_2,v_3\right)$ in 
$\mathbb E^3_1$, we define the (Lorentzian) vector product of $u$ and $v$ as following:
$$
u\times v=\left(u_3v_2-u_2v_3, u_3v_1-u_1v_3, u_1v_2-u_2v_1\right).
$$
One check that the vector product is skew-symmetric, i.e., $u\times v = -v\times u$.

A curve $\gamma=\gamma(t)$ in $\mathbb{E}^3_1$ is said to be space-like, time-like or null (light-like) if 
its tangent vector field $\gamma^{\prime}(t)$ is space-like, time-like or null (light-like), respectively, for all $t$.
 
Let $\gamma$ be a non-null curve in $\mathbb{E}^3_1$ parametrized by arc-length, i.e., 
$\left|\left\langle \gamma^{\prime},\gamma^{\prime}\right\rangle\right|=1$, and we suppose that 
$\left|\left\langle \gamma{''},\gamma{''}\right\rangle\right|\neq 0$. Then this curve $\gamma$ 
induces a Frenet frame \linebreak $\displaystyle \left\{T=\gamma^{\prime}, N=\frac{\gamma''}{\sqrt{\left|\left\langle \gamma{''},\gamma{''}\right\rangle\right|}}, B=T\times N\right\}$ satisfying the following Frenet equations:

\begin{align} \label{Ffnn}
\begin{split}
\left[ \begin{array}{c}
T'\\
N'\\
B'
\end{array}\right]%
=%
\left[ \begin{array}{ccc}
0&\kappa \varepsilon_1&0\\
-\kappa \varepsilon_0&0&-\tau \varepsilon_0 \varepsilon_1\\
0&-\tau \varepsilon_1&0
\end{array}\right]\left[ \begin{array}{c}
T\\
N\\
B
\end{array}\right]
\end{split}
\end{align}
where $\left\langle T,T\right\rangle=\varepsilon_0, \left\langle N,N\right\rangle=\varepsilon_1,%
 \left\langle B,B\right\rangle=-\varepsilon_0 \varepsilon_1, \left\langle T',N\right\rangle=\kappa$ and $\left\langle N',B\right\rangle=\tau.$ The vector fields 
$T, N, B$ and the functions $\kappa, \tau$ are called the tangent, principal normal, binormal and curvature 
and torsion of $\gamma$, respectively. Accordingly, the Frenet frame of $\gamma$ satisfies 
$$
T \times N= B, \quad N \times B= -\varepsilon_1 T, \quad B \times T= -\varepsilon_0 N.
$$
In \eqref{Ffnn}, if $\varepsilon_0=1$ or 
$\varepsilon_0=-1$, then $\gamma$ is space-like or time-like, respectively. A space-like curve $\gamma$ 
satisfying \eqref{Ffnn} is said to be \textit{type1} or \textit{type2} if $\varepsilon_1=1$ or 
$\varepsilon_1=-1$, respectively. 

When the Frenet frame moves along a curve in $\mathbb E^3_1$, there exist an axis of instantaneaus frame's 
rotation (Darboux). The direction of such axis is given by Darboux vector. If $\gamma$ is a unit speed non-null 
curve with the Frenet-Serret apparatus $\left\{\kappa_{\gamma},\tau_{\gamma},T_{\gamma},N_{\gamma},B_{\gamma}\right\}$, the Darboux vector of $\gamma$ is that 
\begin{equation} \label{Dvgnn}
D_{\gamma}(s)=-\varepsilon_0 \varepsilon_1 \tau_{\gamma}(s)T_{\gamma}(s)-\varepsilon_0 \varepsilon_1 \kappa_{\gamma}(s)B_{\gamma}(s),
\end{equation}
whose length is given by
\begin{equation} \label{wnn}
\omega_{\gamma}=\sqrt{\left|\varepsilon_0 {\tau_{\gamma}}^2-\varepsilon_0 \varepsilon_1 {\kappa_{\gamma}}^2\right|}.
\end{equation}
Note that the Darboux vector defined in \eqref{Dvgnn} is also called as the centrode of $\gamma$ in $\mathbb{E}^3_1$. Thus, the Darboux equations are defined as
\begin{subequations}
\begin{eqnarray}
T^{\prime}_{\gamma}=D_{\gamma}\times T_{\gamma},\\
N^{\prime}_{\gamma}=D_{\gamma}\times N_{\gamma},\\
B^{\prime}_{\gamma}=D_{\gamma}\times B_{\gamma}.
\end{eqnarray}
\end{subequations}
The co-centrode of $\gamma$ from the point of \cite{DCA2018} is given by 
\begin{equation} \label{cDvgnn}
D^{*}_{\gamma}(s)=-\varepsilon_0 \varepsilon_1 \tau_{\gamma}(s)B_{\gamma}(s)-\varepsilon_0 \kappa_{\gamma}(s)T_{\gamma}(s),
\end{equation}
which is exactly the derivative of the principal normal vector of $\gamma$ defined in \eqref{Ffnn}. 

Recently, in \cite{DCA2018} authors have introduced the concept of the natural mate of a curve 
in $\mathbb E^3$. Motivated by what happens in Euclidean ambient space, we would like to give next definition:

\begin{definition}
Let $\gamma:I \rightarrow \mathbb E^3_1$ be a non-null Frenet curve in $\mathbb E^3_1$ parametrized 
by the arc-length parameter with Frenet frame $\left\{T_{\gamma},N_{\gamma},B_{\gamma}\right\}$. A curve 
$\beta:I \rightarrow \mathbb E^3_1$ is called the natural mate of the curve $\gamma$, if the tangent $T_{\beta}$ 
is equal to $N_{\gamma}$, i.e., $T_{\beta}=N_{\gamma}$. 
\end{definition}


\section{Some known results} \label{S:Results}

In this section we would like to give some theorems obtained before, which will be play an 
important role in the proofs of our results. 

\begin{thm} \label{Rcnn}
\cite{INP2003} Let $\gamma=\gamma(s)$ be a unit speed non-null curve in $\mathbb E^3_1$, with a space-like or 
a time-like rectifying plane and with the curvature $\kappa(s)>0$. Then up to the isometries 
of $\mathbb E^3_1$, the curve $\gamma$ is rectifying if and only if there holds 
\begin{equation} \label{nnrccondition}
\tau(s)/ \kappa(s)= c_1 s+c_2,
\end{equation}
 where $c_1\in R_0, c_2\in R$.
\end{thm}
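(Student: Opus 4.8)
The plan is to run Chen's classical rectifying-curve argument inside the Lorentzian Frenet frame \eqref{Ffnn}. Recall that $\gamma$ being \emph{rectifying} means that, after translating the origin to a suitable point (an isometry of $\mathbb{E}^3_1$), the position vector $\gamma(s)$ lies in the rectifying plane, i.e.\ in the plane spanned by $T(s)$ and $B(s)$, for every $s$. The hypothesis that this plane be space-like or time-like guarantees it is nondegenerate, so there are uniquely determined smooth functions $\lambda,\mu$ with
\begin{equation*}
\gamma(s)=\lambda(s)\,T(s)+\mu(s)\,B(s),
\end{equation*}
namely $\lambda=\varepsilon_0\langle\gamma,T\rangle$ and $\mu=-\varepsilon_0\varepsilon_1\langle\gamma,B\rangle$.

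For the direct implication I would differentiate this identity, substitute the Frenet equations \eqref{Ffnn}, and collect the $T$-, $N$- and $B$-components. Since $\gamma'=T$, matching components yields the system $\lambda'=1$, $\varepsilon_1(\lambda\kappa-\mu\tau)=0$, $\mu'=0$. The first and third give $\lambda(s)=s+c_2$ and $\mu\equiv b$ constant; the middle one gives $\lambda\kappa=\mu\tau$. One must then observe that $b\neq 0$: otherwise $\gamma=\lambda T$ would force $\lambda\kappa\equiv 0$, impossible since $\kappa>0$ and $\lambda$ is not identically zero. Dividing, $\tau/\kappa=\lambda/\mu=(s+c_2)/b$, which is of the asserted affine form with nonzero leading coefficient $c_1=1/b\in\mathbb{R}_0$.

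For the converse, assume $\tau/\kappa=c_1 s+c_2$ with $c_1\neq 0$. I would set $\mu=1/c_1$ (constant) and $\lambda(s)=\mu(c_1 s+c_2)=s+c_2/c_1$, so that by construction $\lambda'=1$, $\mu'=0$ and $\lambda\kappa=\mu\tau$, and then consider the auxiliary vector field $V(s)=\gamma(s)-\lambda(s)T(s)-\mu\,B(s)$. Differentiating and using \eqref{Ffnn}, the $T$-component of $V'$ is $1-\lambda'=0$ and the $N$-component is $-\varepsilon_1(\lambda\kappa-\mu\tau)=0$, so $V'\equiv 0$; hence $V$ is a constant vector. Translating the origin by $V$ (an isometry of $\mathbb{E}^3_1$) makes $\gamma=\lambda T+\mu B$, i.e.\ $\gamma$ lies in its rectifying plane, so it is rectifying.

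The computations are routine Frenet bookkeeping; the points that need care are (i) keeping the causal signs $\varepsilon_0,\varepsilon_1$ straight when reading off components — which is precisely where the space-like/time-like hypothesis on the rectifying plane enters, since it is what makes the coefficients $\lambda,\mu$ well defined — and (ii) justifying $\mu\neq 0$ (equivalently $b\neq 0$) in the direct implication, so that one may legitimately divide to obtain $\tau/\kappa$. I expect (i) to be the only genuine subtlety relative to the Euclidean case.
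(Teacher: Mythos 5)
Your argument is correct. Note that the paper itself gives no proof of this statement: it is quoted verbatim as a known result from the cited reference (Ilarslan, Nesovi\'c and Petrovi\'c-Torga\v sev), so there is no internal proof to compare against. Your Chen-style decomposition $\gamma=\lambda T+\mu B$, the component-matching via the Frenet equations \eqref{Ffnn} giving $\lambda'=1$, $\mu'=0$, $\varepsilon_1(\lambda\kappa-\mu\tau)=0$, the observation that $\mu\neq 0$ because $\kappa>0$, and the constant-vector argument for the converse are exactly the standard route taken in that reference; the only cosmetic point is that your constants come out as $c_1=1/b$ and $c_2/b$ rather than the labels in \eqref{nnrccondition}, which is harmless relabeling.
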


\begin{thm} \label{SShteo}
\cite{AL2011} Let $\gamma$ be a unit speed space-like curve in $\mathbb E^3_1$.
\begin{enumerate}
\item If the normal vector of $\gamma$ is space-like, then $\gamma$ is a slant helix if and 
only if either one the next two functions 
\begin{equation} \label{Shtype1}
\sigma_{s1}=\frac{\kappa^2}{(\kappa^2-\tau^2)^{3/2}}{\left(\frac{\tau}{\kappa}\right)}^{\prime}\quad or\quad %
\sigma_{s2}=\frac{\kappa^2}{(\tau^2-\kappa^2)^{3/2}}{\left(\frac{\tau}{\kappa}\right)}^{\prime}
\end{equation}
is constant everywhere $\tau^2-\kappa^2$ does not vanish.
\item If the normal vector of $\gamma$ is time-like, then $\gamma$ is a slant helix if and 
only if the functions
\begin{equation} \label{Shtype2}
\sigma_{s3}=\frac{\kappa^2}{(\tau^2+\kappa^2)^{3/2}}{\left(\frac{\tau}{\kappa}\right)}^{\prime}
\end{equation}
is constant.
\end{enumerate}
\end{thm}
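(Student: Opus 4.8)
The plan is to argue directly from the definition of a slant helix. Recall that $\gamma$ is a slant helix precisely when there is a fixed nonzero direction $u\in\mathbb E^3_1$ for which $\langle N_\gamma,u\rangle$ is constant along $\gamma$. Expanding such a $u$ in the Frenet frame, $u=aT+bN+cB$, one has $\langle N,u\rangle=\varepsilon_1 b$, so the slant-helix condition is exactly that $b$ be constant. The case $b=0$ is the general-helix case (the axis is orthogonal to $N$), which is consistent with the statement since then each $\sigma_{si}\equiv0$; otherwise rescale so that $b=1$. Differentiating $u=aT+N+cB$, substituting the Frenet equations \eqref{Ffnn}, and setting $u'=0$ yields, coefficient by coefficient in $T,N,B$, the system
\[
a'-\kappa\varepsilon_0=0,\qquad a\kappa\varepsilon_1-c\tau\varepsilon_1=0,\qquad c'-\tau\varepsilon_0\varepsilon_1=0 .
\]

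The middle equation gives $a\kappa=c\tau$, so, on the region where $\tau/\kappa$ is non-constant, I would set $a=f\tau$ and $c=f\kappa$ for a single unknown $f$; the first and third equations become $f'\tau+f\tau'=\kappa\varepsilon_0$ and $f'\kappa+f\kappa'=\tau\varepsilon_0\varepsilon_1$. The crux is to isolate $f$ by two judicious combinations of these. Forming $\kappa\cdot(\text{first})-\tau\cdot(\text{second})$ eliminates $f'$ and, using $\kappa\tau'-\tau\kappa'=\kappa^2(\tau/\kappa)'$, gives the algebraic identity $f=\varepsilon_0(\kappa^2-\varepsilon_1\tau^2)/\bigl(\kappa^2(\tau/\kappa)'\bigr)$. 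Forming instead $\tau\cdot(\text{first})-\varepsilon_1\kappa\cdot(\text{second})$ cancels the right-hand side completely — this is exactly the point at which the causal character of $N$ enters, since it amounts to subtracting when $N$ is space-like and adding when $N$ is time-like — and leaves the homogeneous equation
\[
f'\bigl(\kappa^2-\varepsilon_1\tau^2\bigr)+\tfrac12\,f\,\bigl(\kappa^2-\varepsilon_1\tau^2\bigr)'=0 ,
\]
so that $f=C\,\bigl|\kappa^2-\varepsilon_1\tau^2\bigr|^{-1/2}$ for a constant $C$. Equating the two expressions for $f$ forces $\kappa^2(\tau/\kappa)'\big/\bigl|\kappa^2-\varepsilon_1\tau^2\bigr|^{3/2}$ to be constant, the exponent $3/2$ arising as the $-1/2$ from the ODE plus the $+1$ from the first combination. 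Splitting into the subcases $\varepsilon_1=1$ with $\kappa^2>\tau^2$, $\varepsilon_1=1$ with $\tau^2>\kappa^2$, and $\varepsilon_1=-1$, this is precisely the constancy of $\sigma_{s1}$, of $\sigma_{s2}$, or of $\sigma_{s3}$; this settles the ``only if'' direction.

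For the converse I would run the computation backwards: given that the relevant $\sigma_{si}$ is a nonzero constant, choose the constant $C$ so that $f:=C\,\bigl|\kappa^2-\varepsilon_1\tau^2\bigr|^{-1/2}$ also equals $\varepsilon_0(\kappa^2-\varepsilon_1\tau^2)/\bigl(\kappa^2(\tau/\kappa)'\bigr)$ — this is possible exactly because $\sigma_{si}$ is constant. Such an $f$ then satisfies both of the two combinations above, and since the $2\times2$ matrix $\bigl(\begin{smallmatrix}\kappa & -\tau\\ \tau & -\varepsilon_1\kappa\end{smallmatrix}\bigr)$ relating those two combinations to the original pair $f'\tau+f\tau'=\kappa\varepsilon_0$, $f'\kappa+f\kappa'=\tau\varepsilon_0\varepsilon_1$ has nonzero determinant $\tau^2-\varepsilon_1\kappa^2$ exactly on the locus allowed by the statement (this is the role of the proviso ``everywhere $\tau^2-\kappa^2$ does not vanish''), $f$ satisfies that original pair as well. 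Putting $a=f\tau$, $b=1$, $c=f\kappa$ and using \eqref{Ffnn} once more then verifies $u'=0$ while $\langle N,u\rangle=\varepsilon_1$ is constant, so $\gamma$ is a slant helix. I expect the only genuine difficulty to be bookkeeping: keeping the signs $\varepsilon_0,\varepsilon_1$, the three reality conventions $\kappa^2-\tau^2,\ \tau^2-\kappa^2,\ \kappa^2+\tau^2$, and the ``add versus subtract'' choice mutually consistent, and disposing separately of the loci where $\tau/\kappa$ is locally constant or $\kappa^2=\tau^2$.
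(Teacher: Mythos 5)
This theorem is stated in the paper's ``Some known results'' section as an import from \cite{AL2011}; the paper gives no proof of it, so there is nothing internal to compare against. Your argument is a correct, self-contained proof along the standard lines of that reference: decompose the (constant) axis $u=aT+bN+cB$ in the Frenet frame, reduce the slant-helix condition to constancy of $b$, and extract from $u'=0$ the algebraic identity $f=\varepsilon_0(\kappa^2-\varepsilon_1\tau^2)/\bigl(\kappa^2(\tau/\kappa)'\bigr)$ together with the homogeneous relation $\bigl(f^2(\kappa^2-\varepsilon_1\tau^2)\bigr)'=0$ (which, as a useful sanity check, is exactly the constancy of $\langle u,u\rangle$). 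The sign bookkeeping checks out: with \eqref{Ffnn} the $N$-component of $u'$ is $\varepsilon_1(a\kappa-c\tau)$, your two combinations have determinant $\tau^2-\varepsilon_1\kappa^2$, which is nonzero precisely under the proviso in the statement when $\varepsilon_1=1$ and automatically when $\varepsilon_1=-1$, and the three subcases of $\kappa^2-\varepsilon_1\tau^2$ reproduce $\sigma_{s1},\sigma_{s2},\sigma_{s3}$. The only loose ends are the ones you already flag — the loci where $(\tau/\kappa)'=0$ or where $\kappa^2=\tau^2$, and the degenerate case $b=0$ — and these are genuinely just the standard regularity caveats, handled exactly as you indicate.
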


\begin{thm} \label{TShteo}
\cite{AL2011} Let $\gamma$ be a unit speed time-like curve in $\mathbb E^3_1$.
Then $\gamma$ is a slant helix if and only if either one the next two functions 
\begin{equation} \label{TSh}
\sigma_{t1}=\frac{\kappa^2}{(\tau^2-\kappa^2)^{3/2}}{\left(\frac{\tau}{\kappa}\right)}^{\prime}\quad or\quad %
\sigma_{t2}=\frac{\kappa^2}{(\kappa^2-\tau^2)^{3/2}}{\left(\frac{\tau}{\kappa}\right)}^{\prime}
\end{equation}
is constant everywhere $\tau^2-\kappa^2$ does not vanish.
\end{thm}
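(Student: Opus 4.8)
The plan is to use the classical description of a slant helix: recall that $\gamma$ is a slant helix exactly when its principal normal $N$ makes a constant angle with some fixed direction, i.e.\ there is a nonzero constant vector $U\in\mathbb E^3_1$ for which $\langle N,U\rangle$ is constant. Since $\gamma$ is time-like, $\varepsilon_0=-1$, and because the orthogonal complement of a time-like vector is positive definite we also have $\varepsilon_1=1$ (so $N$ and $B$ are space-like); thus \eqref{Ffnn} becomes $T'=\kappa N$, $N'=\kappa T+\tau B$, $B'=-\tau N$. I would begin with necessity: write $U=aT+bN+eB$, so that $b=\langle N,U\rangle$ is constant, say $b=c$, and differentiate $U'=0$, using the Frenet equations, to get $a'+c\kappa=0$, $a\kappa-e\tau=0$, $c\tau+e'=0$. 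Equivalently, $W:=U-cN=aT+eB$ lies in the rectifying plane $\mathrm{span}\{T,B\}$, has constant inner product $\langle W,W\rangle=e^2-a^2$, and satisfies $W'=-c(\kappa T+\tau B)$; note $W\neq 0$, since otherwise $U=cN$, which is never a nonzero constant vector.

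The key point is that for a time-like curve the rectifying plane is Lorentzian, so $W$ is space-like, time-like or null, and the three cases are distinguished by $\mathrm{sign}(e^2-a^2)$, equivalently — via $a\kappa=e\tau$, i.e.\ $a/e=\tau/\kappa$ — by $\mathrm{sign}(\kappa^2-\tau^2)$. The null case cannot occur on our domain: if $W=f(T\pm B)$, then the $N$-component of $W'=-c(\kappa T+\tau B)$ forces $f(\kappa\mp\tau)=0$, and since $f\equiv0$ is excluded we would get $\kappa^2=\tau^2$, contrary to hypothesis. So $W$ is space-like (and then $\kappa^2>\tau^2$) or time-like (and then $\tau^2>\kappa^2$). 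In the first subcase I would write $W=r(\sinh\theta\,T+\cosh\theta\,B)$ with $r>0$ constant; substituting into $W'=-c(\kappa T+\tau B)$ gives $r\theta'\cosh\theta=-c\kappa$ and $r\theta'\sinh\theta=-c\tau$, while $a\kappa=e\tau$ gives $\tanh\theta=\tau/\kappa$, hence $\cosh\theta=\kappa/\sqrt{\kappa^2-\tau^2}$. Differentiating $\tanh\theta=\tau/\kappa$ yields $\theta'=\frac{\kappa^2}{\kappa^2-\tau^2}\bigl(\tfrac{\tau}{\kappa}\bigr)'$, and combining this with $r\theta'\cosh\theta=-c\kappa$ gives $\frac{\kappa^2}{(\kappa^2-\tau^2)^{3/2}}\bigl(\tfrac{\tau}{\kappa}\bigr)'=-c/r$, i.e.\ $\sigma_{t2}$ is constant. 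The time-like subcase is identical with $W=r(\cosh\theta\,T+\sinh\theta\,B)$, giving $\coth\theta=\tau/\kappa$ and, after the analogous computation, $\sigma_{t1}$ constant.

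For sufficiency I would reverse the construction. Assume, say, $\sigma_{t2}\equiv m$ on an interval where $\kappa^2>\tau^2$. Put $\theta=\operatorname{arctanh}(\tau/\kappa)$, so $\cosh\theta=\kappa/\sqrt{\kappa^2-\tau^2}$, $\sinh\theta=\tau/\sqrt{\kappa^2-\tau^2}$, and a direct differentiation gives $\theta'=m\sqrt{\kappa^2-\tau^2}$, hence $\theta'\cosh\theta=m\kappa$ and $\theta'\sinh\theta=m\tau$. Set $U=-mN+\sinh\theta\,T+\cosh\theta\,B=-mN+\frac{\tau}{\sqrt{\kappa^2-\tau^2}}T+\frac{\kappa}{\sqrt{\kappa^2-\tau^2}}B$. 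Computing $U'$ with $T'=\kappa N$, $B'=-\tau N$, $N'=\kappa T+\tau B$: the $N$-component vanishes because $\kappa\sinh\theta-\tau\cosh\theta=0$, and the $T$- and $B$-components vanish because $\theta'\cosh\theta=m\kappa$ and $\theta'\sinh\theta=m\tau$. Thus $U$ is a nonzero constant vector with $\langle N,U\rangle=-m$ constant, so $\gamma$ is a slant helix; the $\sigma_{t1}$ case is symmetric, using $\operatorname{arccoth}$ in place of $\operatorname{arctanh}$.

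I expect the difficulty to be bookkeeping rather than conceptual: correctly tracking the causal character of $W$, pinning down the signs and the choice of square-root branch in the hyperbolic parametrization, and checking that the correspondence comes out as $\sigma_{t2}\leftrightarrow\kappa^2>\tau^2$ and $\sigma_{t1}\leftrightarrow\tau^2>\kappa^2$. One should also record the degenerate subcases — $\tau/\kappa$ constant, where $\gamma$ is a general helix and every $\sigma_{ti}$ vanishes identically, and $W=0$, which is impossible — and dispose of the null case exactly where the hypothesis that $\tau^2-\kappa^2$ never vanishes is used.
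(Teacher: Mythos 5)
The paper does not prove this statement at all: Theorem \ref{TShteo} sits in the section ``Some known results'' and is simply imported from \cite{AL2011}, so there is no in-paper argument to compare yours against. Your blind proof is correct and is, in substance, the standard axis-decomposition argument used in that reference: writing the fixed direction as $U=aT+cN+eB$, reducing to the rectifying-plane component $W=aT+eB$ with $\langle W,W\rangle$ constant and $W'=-c(\kappa T+\tau B)$, splitting on the causal character of $W$ (which, via $a\kappa=e\tau$, is governed by $\operatorname{sign}(\kappa^2-\tau^2)$), and parametrizing hyperbolically to extract $\sigma_{t1}$ or $\sigma_{t2}$; the converse construction of $U=-mN+\sinh\theta\,T+\cosh\theta\,B$ checks out, since the $T$-, $N$- and $B$-components of $U'$ all vanish and $\langle U,U\rangle=m^2+1>0$ guarantees $U\neq0$. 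Your handling of the excluded null case and of the degenerate general-helix case ($\tau/\kappa$ constant) is exactly where the hypothesis that $\tau^2-\kappa^2$ never vanishes is needed, and you identify that correctly. The only caveat worth recording is definitional: the argument presupposes that ``slant helix'' in $\mathbb E^3_1$ means $\langle N,U\rangle$ is constant for some fixed nonzero $U$, which is the convention of \cite{AL2011}; with that convention fixed, the proof is complete.
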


\begin{thm} \label{SScteo}
\cite{PP1999,PS2000} Let $\gamma$ be a unit speed space-like curve in $\mathbb E^3_1$.
\begin{enumerate}
\item If the normal vector of $\gamma$ is space-like, then $\gamma$ is a spherical curve if and 
only if 
\begin{equation} \label{Sctype1}
\frac{\tau}{\kappa}=\left[\frac{1}{\tau}\left(\frac{1}{\kappa}\right)^{\prime}\right]^{\prime}\quad and\quad %
\left(\frac{1}{\kappa}\right)^{2}>\left[\frac{1}{\tau}\left(\frac{1}{\kappa}\right)^{\prime}\right]^{2}
\end{equation}
holds, where $\frac{1}{\tau},\frac{1}{\kappa}$ does not vanish.
\item If the normal vector of $\gamma$ is time-like, then $\gamma$ is a spherical curve if and 
only if 
\begin{equation} \label{Sctype2}
\frac{\tau}{\kappa}=\left[\frac{1}{\tau}\left(\frac{1}{\kappa}\right)^{\prime}\right]^{\prime}\quad and\quad %
\left(\frac{1}{\kappa}\right)^{2}<\left[\frac{1}{\tau}\left(\frac{1}{\kappa}\right)^{\prime}\right]^{2}
\end{equation}
holds, where $\frac{1}{\tau},\frac{1}{\kappa}$ does not vanish.
\end{enumerate}
\end{thm}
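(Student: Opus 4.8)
The plan is to prove both implications by expressing the position vector $\gamma-p$ in the Frenet frame \eqref{Ffnn} and differentiating. Since $\gamma$ is space-like we have $\varepsilon_0=1$; write $\varepsilon=\varepsilon_1\in\{1,-1\}$ for the causal character of $N$, so that $\langle B,B\rangle=-\varepsilon$. Here $\gamma$ being \emph{spherical} means there is a fixed point $p\in\mathbb E^3_1$ and a constant $c\neq0$ with $\langle\gamma-p,\gamma-p\rangle=c$ (a pseudo-sphere of $\mathbb E^3_1$; in fact $c>0$, i.e.\ a de Sitter pseudo-sphere, as will come out of the sign analysis).

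For the forward implication, assume $\langle\gamma-p,\gamma-p\rangle=c$ and differentiate successively, feeding in \eqref{Ffnn} each time. The first differentiation gives $\langle T,\gamma-p\rangle=0$; the second gives $\kappa\varepsilon\langle N,\gamma-p\rangle+1=0$, hence $\langle N,\gamma-p\rangle=-\varepsilon/\kappa$; the third, using $N'=-\kappa T-\tau\varepsilon B$ together with $\langle T,\gamma-p\rangle=0$, gives $\kappa'\varepsilon\langle N,\gamma-p\rangle-\kappa\tau\langle B,\gamma-p\rangle=0$, hence
\[
\langle B,\gamma-p\rangle=-\frac{\kappa'}{\kappa^2\tau}=\frac1\tau\left(\frac1\kappa\right)';
\]
differentiating this last identity and using $B'=-\tau\varepsilon N$ and $\langle N,\gamma-p\rangle=-\varepsilon/\kappa$ produces $\bigl[\tfrac1\tau(\tfrac1\kappa)'\bigr]'=\tfrac\tau\kappa$, the first asserted equality. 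Expanding $\gamma-p$ in the frame with these three components yields
\[
\gamma-p=-\frac1\kappa\,N-\frac\varepsilon\tau\left(\frac1\kappa\right)'B,
\]
so $c=\langle\gamma-p,\gamma-p\rangle=\varepsilon\bigl[(1/\kappa)^2-(\tfrac1\tau(\tfrac1\kappa)')^2\bigr]$, a genuine constant since its derivative is $2\langle T,\gamma-p\rangle=0$. For type 1 ($\varepsilon=1$) this reads $c=(1/\kappa)^2-(\tfrac1\tau(\tfrac1\kappa)')^2$ and for type 2 ($\varepsilon=-1$) it reads $c=(\tfrac1\tau(\tfrac1\kappa)')^2-(1/\kappa)^2$; requiring $c>0$ is exactly the stated inequality in each case.

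For the converse, given the identity $\tfrac\tau\kappa=\bigl[\tfrac1\tau(\tfrac1\kappa)'\bigr]'$, define the candidate centre
\[
p(s)=\gamma(s)+\frac{1}{\kappa(s)}\,N(s)+\frac{\varepsilon}{\tau(s)}\left(\frac{1}{\kappa(s)}\right)'B(s).
\]
Differentiating and substituting \eqref{Ffnn}, the $T$- and $N$-contributions cancel identically (using $\gamma'=T$ and $N'=-\kappa T-\tau\varepsilon B$), while the $B$-contribution collapses to $\varepsilon\bigl(\bigl[\tfrac1\tau(\tfrac1\kappa)'\bigr]'-\tfrac\tau\kappa\bigr)B$, which vanishes by hypothesis. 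Hence $p$ is a fixed point, $\gamma-p=-\tfrac1\kappa N-\tfrac\varepsilon\tau(\tfrac1\kappa)'B$, and $\langle\gamma-p,\gamma-p\rangle=\varepsilon\bigl[(1/\kappa)^2-(\tfrac1\tau(\tfrac1\kappa)')^2\bigr]$ is constant because its derivative equals $2\langle T,\gamma-p\rangle=0$; the remaining hypothesis — the inequality appropriate to the type — says precisely that this constant is positive, so $\gamma$ lies on the pseudo-sphere of radius $\sqrt{\langle\gamma-p,\gamma-p\rangle}$ centred at $p$.

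The calculations are short; the only delicate point is the sign bookkeeping forced by the Lorentzian metric — keeping $\varepsilon_0=1$ separate from $\varepsilon_1=\pm1$ and the induced $\langle B,B\rangle=-\varepsilon_1$ in \eqref{Ffnn} — since it is exactly the factor $\varepsilon_1$ in $c=\varepsilon_1[(1/\kappa)^2-(\tfrac1\tau(\tfrac1\kappa)')^2]$ that flips the inequality between the space-like-normal case (type 1, ``$>$'') and the time-like-normal case (type 2, ``$<$''). One also uses throughout the hypothesis that $1/\kappa$ and $1/\tau$ do not vanish (equivalently, that $\gamma$ is a Frenet curve) to justify dividing by $\kappa$ and $\tau$.
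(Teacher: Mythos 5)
This theorem is stated in the paper's ``Some known results'' section purely as a citation of \cite{PP1999,PS2000}; the paper supplies no proof of its own, so there is nothing internal to compare against. Your argument is correct and is essentially the standard one from those references: repeated differentiation of $\langle\gamma-p,\gamma-p\rangle=c$ through the Frenet equations \eqref{Ffnn} with $\varepsilon_0=1$ to extract the three frame components of $\gamma-p$, and, conversely, verification that the candidate centre $p=\gamma+\tfrac1\kappa N+\tfrac{\varepsilon_1}{\tau}\bigl(\tfrac1\kappa\bigr)'B$ is stationary; the sign bookkeeping with $\langle B,B\rangle=-\varepsilon_1$ that flips the inequality between the two cases is handled correctly. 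The one imprecision is your remark that $c>0$ ``comes out of the sign analysis'': it does not — a space-like curve can equally well lie on the hyperbolic plane $\langle x-p,x-p\rangle=-r^2$, in which case the stated inequality would reverse — so $c>0$ must be taken as part of the definition of ``spherical'' (lying on the de Sitter pseudosphere $S^2_1(r)$, which is indeed the convention of \cite{PP1999,PS2000}), not as a derived fact. With that definitional point made explicit, the proof is complete.
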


\begin{thm} \label{TScteo}
\cite{PS2001} Let $\gamma$ be a unit speed time-like curve in $\mathbb E^3_1$.
Then $\gamma$ is a spherical curve if and only if 
\begin{equation} \label{TSc}
\frac{\tau}{\kappa}=-\left[\frac{1}{\tau}\left(\frac{1}{\kappa}\right)^{\prime}\right]^{\prime}
\end{equation}
holds, where $\tau,\kappa$ does not vanish.
\end{thm}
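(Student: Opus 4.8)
The plan is to use the moving-frame method: express the position vector relative to a candidate centre in the Frenet frame and read off the spherical condition from the Frenet equations. Since $\gamma$ is time-like we have $\varepsilon_0=\langle T,T\rangle=-1$, and because any vector orthogonal to the time-like $T$ is space-like we get $\varepsilon_1=\langle N,N\rangle=1$ and $\langle B,B\rangle=-\varepsilon_0\varepsilon_1=1$; hence \eqref{Ffnn} specializes to $T'=\kappa N$, $N'=\kappa T+\tau B$, $B'=-\tau N$. Here ``spherical'' means that $\gamma$ lies on a pseudo-sphere $\langle\gamma-m,\gamma-m\rangle=\delta r^{2}$ for some fixed $m$, constant $r>0$ and sign $\delta\in\{\pm1\}$; part of the argument will in fact force $\delta=+1$ (a de Sitter sphere $S^{2}_{1}$), which must happen since a time-like curve cannot lie on a hyperbolic plane $H^{2}$.

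For necessity, suppose $\langle\gamma-m,\gamma-m\rangle$ is constant and write $\gamma-m=aT+bN+cB$ for smooth functions $a,b,c$ on $I$. Differentiating the constancy and using $(\gamma-m)'=T$ gives $\langle T,\gamma-m\rangle=0$; since $\langle T,T\rangle=-1$ this forces $a=0$. Differentiating $\gamma-m=bN+cB$ and substituting the Frenet equations above yields $T=b\kappa\,T+(b'-c\tau)N+(b\tau+c')B$, so comparing coefficients gives $b=1/\kappa$, then $c=\frac{1}{\tau}\left(\frac{1}{\kappa}\right)'$ from $b'=c\tau$ (this is where $\tau\neq0$ enters), and finally $c'=-\tau/\kappa$ from $b\tau+c'=0$. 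Eliminating $c$ between the last two relations gives precisely \eqref{TSc}. Moreover $\langle\gamma-m,\gamma-m\rangle=b^{2}\langle N,N\rangle+c^{2}\langle B,B\rangle=b^{2}+c^{2}\ge 1/\kappa^{2}>0$, confirming $\delta=+1$.

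For sufficiency, assume \eqref{TSc} holds and define $b:=1/\kappa$, $c:=\frac{1}{\tau}\left(\frac{1}{\kappa}\right)'$, and the point $m:=\gamma-bN-cB$. A direct computation with the Frenet equations gives $m'=(1-b\kappa)T-(b'-c\tau)N-(b\tau+c')B$; the first coefficient vanishes by the choice of $b$, the second by the choice of $c$, and the third equals $\frac{\tau}{\kappa}+\left[\frac{1}{\tau}\left(\frac{1}{\kappa}\right)'\right]'$, which is zero by \eqref{TSc}. Hence $m$ is a fixed point, and $\langle\gamma-m,\gamma-m\rangle=b^{2}+c^{2}$; using $b'=c\tau$ and $c'=-b\tau$ (both now available) its derivative $2bb'+2cc'=2bc\tau-2bc\tau$ vanishes, so $b^{2}+c^{2}\equiv r^{2}$ is a positive constant. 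Therefore $\gamma$ lies on the de Sitter sphere of radius $r$ centred at $m$, i.e.\ $\gamma$ is spherical.

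The computations are routine; the only points demanding care are the Lorentzian sign bookkeeping — obtaining $a=0$ with the correct sign from $\langle T,T\rangle=-1$, and checking that the radius-squared $b^{2}+c^{2}$ comes out positive rather than of indefinite sign — and invoking the hypotheses $\kappa\neq0$, $\tau\neq0$ exactly where $1/\kappa$ and $1/\tau$ appear, so that $b$ and $c$ are well defined on all of $I$.
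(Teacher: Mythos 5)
Your proof is correct. Note, however, that the paper does not prove this statement at all: Theorem \ref{TScteo} is quoted as a known result from \cite{PS2001} in the ``Some known results'' section, so there is no internal proof to compare against. Your moving-frame argument is the standard one (and presumably the one in the cited reference): the sign bookkeeping is consistent with the paper's conventions, since for a time-like curve \eqref{Ffnn} with $\varepsilon_0=-1$, $\varepsilon_1=1$ indeed gives $T'=\kappa N$, $N'=\kappa T+\tau B$, $B'=-\tau N$ with $\langle N,N\rangle=\langle B,B\rangle=1$, which is exactly what produces $a=0$, $b=1/\kappa$, $c=\tfrac{1}{\tau}(1/\kappa)'$, $c'=-\tau/\kappa$, and the positive radius $b^2+c^2$ forcing the de Sitter sphere. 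The only hypothesis you use beyond $\kappa,\tau\neq 0$ is that the squared distance to the centre is constant, which is the intended meaning of ``spherical'' here, so the argument is complete.
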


\section{Natural mates of non-null Frenet curves}

In this section, first we recall the following results from \cite{DCA2018}. 

\begin{thm} \label{TeoS}
Let $\gamma$ be a space-like Frenet curve in $\mathbb E^3_1$ with Frenet-Serret apparatus $\left\{\kappa_{\gamma},\tau_{\gamma},T_{\gamma},N_{\gamma},B_{\gamma}\right\}$. Then, there exist a unit speed non-null curve $\beta$ with Frenet-Serret apparatus $\left\{\kappa_{\beta},\tau_{\beta},T_{\beta},N_{\beta},B_{\beta}\right\}$ are given by 
\begin{itemize}
\item[(i)] 
\begin{subequations} \label{Faofb1ALL}
\begin{eqnarray} \label{Faofb1a}
\kappa_{\beta}=\sqrt{{\kappa_{\gamma}}^2-{\tau_{\gamma}}^2},\quad \tau_{\beta}=\frac{{\kappa_{\gamma}}^2}{{\kappa_{\gamma}}^2-{\tau_{\gamma}}^2}{\left(\frac{\tau_{\gamma}}{\kappa_{\gamma}}\right)^{\prime}},\\ \label{Faofb1b}
T_{\beta}=N_{\gamma},\quad N_{\beta}=\frac{D^{*}_{\gamma}}{\sqrt{{\kappa_{\gamma}}^2-{\tau_{\gamma}}^2}},\quad %
B_{\beta}= \frac{D_{\gamma}}{\sqrt{{\kappa_{\gamma}}^2-{\tau_{\gamma}}^2}}
\end{eqnarray}
\end{subequations} 
such that $\left|\kappa\right|>\left|\tau\right|$. In case, $\beta$ is a space-like curve of \textit{type1} in $\mathbb E^3_1$.
\item[(ii)]
\begin{subequations} \label{Faofb2ALL}
\begin{eqnarray} \label{Faofb2a}
\kappa_{\beta}=\sqrt{{\tau_{\gamma}}^2-{\kappa_{\gamma}}^2},\quad \tau_{\beta}=\frac{{\kappa_{\gamma}}^2}{{\tau_{\gamma}}^2-{\kappa_{\gamma}}^2}{\left(\frac{\tau_{\gamma}}{\kappa_{\gamma}}\right)^{\prime}},\\ \label{Faofb2b}
T_{\beta}=N_{\gamma},\quad N_{\beta}=\frac{D^{*}_{\gamma}}{\sqrt{{\tau_{\gamma}}^2-{\kappa_{\gamma}}^2}},\quad %
B_{\beta}= \frac{D_{\gamma}}{\sqrt{{\tau_{\gamma}}^2-{\kappa_{\gamma}}^2}}
\end{eqnarray}
\end{subequations} 
such that $\left|\kappa\right|<\left|\tau\right|$. In case, $\beta$ is a space-like curve of \textit{type2} in $\mathbb E^3_1$.
\item[(iii)]
\begin{subequations} \label{Faofb3ALL}
\begin{eqnarray} \label{Faofb3a}
\kappa_{\beta}=\sqrt{{\kappa_{\gamma}}^2+{\tau_{\gamma}}^2},\quad \tau_{\beta}=\frac{{\kappa_{\gamma}}^2}{{\kappa_{\gamma}}^2+{\tau_{\gamma}}^2}{\left(\frac{\tau_{\gamma}}{\kappa_{\gamma}}\right)^{\prime}},\\ \label{Faofb3b}
T_{\beta}=N_{\gamma},\quad N_{\beta}=\frac{D^{*}_{\gamma}}{\sqrt{{\tau_{\gamma}}^2+{\kappa_{\gamma}}^2}},\quad %
B_{\beta}= \frac{D_{\gamma}}{\sqrt{{\tau_{\gamma}}^2+{\kappa_{\gamma}}^2}}
\end{eqnarray}
\end{subequations}
In case, $\beta$ is a time-like curve in $\mathbb E^3_1$.
\end{itemize}
\end{thm}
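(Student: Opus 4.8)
The natural mate is obtained by integration: fixing a point $p_{0} \in \mathbb E^{3}_{1}$ and setting $\beta(s) = p_{0} + \int_{s_{0}}^{s} N_{\gamma}(u)\,du$, we get a curve with $\beta' = T_{\beta} = N_{\gamma}$, and conversely any natural mate of $\gamma$ must satisfy $\beta' = N_{\gamma}$, so $\beta$ is unique up to a translation. Since $\langle T_{\beta}, T_{\beta}\rangle = \langle N_{\gamma}, N_{\gamma}\rangle = \varepsilon_{1} = \pm 1$, the curve $\beta$ is automatically unit speed and non-null; it is space-like exactly when $N_{\gamma}$ is space-like ($\varepsilon_{1} = 1$), which is the setting of (i)--(ii), and time-like exactly when $N_{\gamma}$ is time-like ($\varepsilon_{1} = -1$), which is (iii). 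So the first causal dichotomy is immediate, and everything else reduces to reading off $\kappa_{\beta}, N_{\beta}, B_{\beta}, \tau_{\beta}$ from their definitions; the Frenet equations \eqref{Ffnn} for $\beta$ then follow automatically.

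Differentiating once, $T_{\beta}' = N_{\gamma}'$, which by \eqref{Ffnn} is precisely the co-centrode $D^{*}_{\gamma}$ of \eqref{cDvgnn}. Using $\langle T_{\gamma}, T_{\gamma}\rangle = \varepsilon_{0}$, $\langle B_{\gamma}, B_{\gamma}\rangle = -\varepsilon_{0}\varepsilon_{1}$ and $T_{\gamma} \perp B_{\gamma}$ gives $\langle D^{*}_{\gamma}, D^{*}_{\gamma}\rangle = \varepsilon_{0}(\kappa_{\gamma}^{2} - \varepsilon_{1}\tau_{\gamma}^{2})$, which for a space-like $\gamma$ ($\varepsilon_{0} = 1$) equals $\kappa_{\gamma}^{2} - \tau_{\gamma}^{2}$ if $\varepsilon_{1} = 1$ and $\kappa_{\gamma}^{2} + \tau_{\gamma}^{2}$ if $\varepsilon_{1} = -1$. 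Hence $\kappa_{\beta} = \sqrt{|\langle T_{\beta}', T_{\beta}'\rangle|}$ is $\sqrt{\kappa_{\gamma}^{2} - \tau_{\gamma}^{2}}$, $\sqrt{\tau_{\gamma}^{2} - \kappa_{\gamma}^{2}}$, or $\sqrt{\kappa_{\gamma}^{2} + \tau_{\gamma}^{2}}$ in the three cases, and $N_{\beta} = T_{\beta}'/\kappa_{\beta} = D^{*}_{\gamma}/\kappa_{\beta}$ throughout. Since $\langle N_{\beta}, N_{\beta}\rangle$ has the sign of $\langle D^{*}_{\gamma}, D^{*}_{\gamma}\rangle$, in the space-like case with $\varepsilon_{1} = 1$ it is $+1$ exactly when $|\kappa_{\gamma}| > |\tau_{\gamma}|$ and $-1$ exactly when $|\kappa_{\gamma}| < |\tau_{\gamma}|$, yielding the type1 / type2 classification of $\beta$; when $\varepsilon_{1} = -1$, $\beta$ is time-like.

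For the binormal I would expand $B_{\beta} = T_{\beta} \times N_{\beta} = (N_{\gamma} \times D^{*}_{\gamma})/\kappa_{\beta}$ using the frame's vector-product relations $N_{\gamma} \times B_{\gamma} = -\varepsilon_{1} T_{\gamma}$ and $N_{\gamma} \times T_{\gamma} = -B_{\gamma}$; this makes $N_{\gamma} \times D^{*}_{\gamma}$ proportional to $\tau_{\gamma} T_{\gamma} + \kappa_{\gamma} B_{\gamma}$, i.e. to the Darboux vector $D_{\gamma}$ of \eqref{Dvgnn}, so $B_{\beta} = D_{\gamma}/\kappa_{\beta}$. For the torsion I would differentiate $N_{\beta} = D^{*}_{\gamma}/\kappa_{\beta}$ and use $\tau_{\beta} = \langle N_{\beta}', B_{\beta}\rangle$: the $\kappa_{\beta}'$-term dies because $\langle D^{*}_{\gamma}, D_{\gamma}\rangle = 0$, and the $N_{\gamma}$-component of $(D^{*}_{\gamma})'$ contributes nothing since $D_{\gamma}$ lies in the $T_{\gamma}$--$B_{\gamma}$ plane, leaving $\langle (D^{*}_{\gamma})', D_{\gamma}\rangle = -\varepsilon_{0}\varepsilon_{1}\kappa_{\gamma}^{2}(\tau_{\gamma}/\kappa_{\gamma})'$; dividing by $\kappa_{\beta}^{2} = |\kappa_{\gamma}^{2} - \varepsilon_{1}\tau_{\gamma}^{2}|$ gives the three formulas for $\tau_{\beta}$.

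The computations are routine; the only delicate point is the sign bookkeeping, namely keeping the indicators $\varepsilon_{0}, \varepsilon_{1}$ of $\gamma$ and $\varepsilon_{0}^{\beta} = \varepsilon_{1}$, $\varepsilon_{1}^{\beta} = \operatorname{sgn}\langle D^{*}_{\gamma}, D^{*}_{\gamma}\rangle$ of $\beta$ consistent across the three cases and respecting the Lorentzian vector-product convention of Section \ref{SectPre} when fixing $B_{\beta}$ --- once the orientation of $\{T_{\beta}, N_{\beta}, B_{\beta}\}$ is set, the sign in $\tau_{\beta}$ is forced by it. It is also worth noting where the hypotheses enter: $\kappa_{\gamma} > 0$ is needed for $(\tau_{\gamma}/\kappa_{\gamma})'$ and the formulas to make sense, and the inequalities $|\kappa_{\gamma}| > |\tau_{\gamma}|$, $|\kappa_{\gamma}| < |\tau_{\gamma}|$ in (i)--(ii) are precisely what guarantee $\langle D^{*}_{\gamma}, D^{*}_{\gamma}\rangle \neq 0$, i.e. $\kappa_{\beta} > 0$, so that $\beta$ genuinely carries a Frenet frame; in (iii) this nondegeneracy is automatic.
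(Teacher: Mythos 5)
Your proposal is correct, and it reaches the theorem by a genuinely more direct route than the paper. You construct $\beta$ explicitly as an antiderivative of $N_{\gamma}$ and then read off its Frenet apparatus by differentiating: $T_{\beta}'=N_{\gamma}'=D^{*}_{\gamma}$, $\langle D^{*}_{\gamma},D^{*}_{\gamma}\rangle=\varepsilon_{0}(\kappa_{\gamma}^{2}-\varepsilon_{1}\tau_{\gamma}^{2})$ giving the three values of $\kappa_{\beta}$ and the causal classification of $N_{\beta}$, then the cross-product identities for $B_{\beta}$ and $\tau_{\beta}=\langle N_{\beta}',B_{\beta}\rangle$. The paper instead works at the level of the moving frame $\left\{N_{\gamma},\,D^{*}_{\gamma}/\omega_{\gamma},\,D_{\gamma}/\omega_{\gamma}\right\}$: it verifies that this frame satisfies a Frenet-type ODE system with curvature $\omega_{\gamma}$ and torsion $\sigma\,\omega_{\gamma}$ (using the auxiliary identities $(\kappa_{\gamma}/\omega_{\gamma})'=-\tau_{\gamma}\sigma$, $(\tau_{\gamma}/\omega_{\gamma})'=\kappa_{\gamma}\sigma$) and then invokes the existence theorem for curves with prescribed Frenet data. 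The computational core is the same in both arguments; your version has the advantage of making the uniqueness of $\beta$ up to translation and the identity $T_{\beta}=N_{\gamma}$ completely explicit, while the paper's version exhibits $\tau_{\beta}/\kappa_{\beta}$ as the slant-helix function $\sigma$, which it exploits in the corollaries. One point you flag but do not fully settle is the orientation of $B_{\beta}$: with the paper's product rules one finds $N_{\gamma}\times D^{*}_{\gamma}=-\varepsilon_{1}D_{\gamma}$, so for $\varepsilon_{1}=1$ the binormal $T_{\beta}\times N_{\beta}$ is $-D_{\gamma}/\kappa_{\beta}$ rather than $D_{\gamma}/\kappa_{\beta}$ as stated in \eqref{Faofb1b}--\eqref{Faofb2b}; carrying this sign through $\tau_{\beta}=\langle N_{\beta}',B_{\beta}\rangle$ in fact reproduces the stated torsion formulas in all three cases, so the discrepancy lives only in the direction of $B_{\beta}$ (a sign the paper itself does not track). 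This does not affect the substance of your argument.
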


\begin{proof}
Assume that $\gamma$ be a space-like Frenet curve in $\mathbb E^3_1$. Since $\gamma$ can be of \textit{type1} 
or \textit{type2}, we consider on these situations. However, as the proof of these situations will be very 
similar to each other just for the second case will be enough to prove. Thus, let $\gamma$ be a space-like
 Frenet curve of \textit{type2}. Obviously, the set of $\left\{N_{\gamma},\frac{D^{*}_{\gamma}}{\omega_{\gamma}},\frac{D_{\gamma}}{\omega_{\gamma}}\right\}$ is an orthonormal frame along $\gamma$ of \textit{type2} in $\mathbb E^3_1$, where $D_{\gamma},\omega_{\gamma},D^{*}_{\gamma}$ are defined in \eqref{Dvgnn}, \eqref{wnn}, \eqref{cDvgnn}, respectively, with $\varepsilon_0=1,\varepsilon_1=-1$. 
By considering \eqref{Shtype2} and using, 
\begin{equation} \nonumber
\left(\frac{\kappa_{\gamma}}{\omega_{\gamma}}\right)^{\prime}=-\tau_{\gamma}\sigma_{s3}\quad and \quad%
\left(\frac{\tau_{\gamma}}{\omega_{\gamma}}\right)^{\prime}=\kappa_{\gamma}\sigma_{s3}\quad 
\end{equation}
and the Frenet formulas given in \eqref{Ffnn} with $\varepsilon_0=1,\varepsilon_1=-1$, we get
\begin{equation} \label{FfTbeta}
{N_{\gamma}}^{\prime}=D^{*}_{\gamma},\quad \left(\frac{D^{*}_{\gamma}}{\omega_{\gamma}}\right)^{\prime}=\omega_{\gamma}N_{\gamma}+\sigma_{s3}\omega_{\gamma}\left(\frac{D_{\gamma}}{\omega_{\gamma}}\right),\quad\left(\frac{D_{\gamma}}{\omega_{\gamma}}\right)^{\prime}=-\sigma_{s3}\omega_{\gamma}\left(\frac{D^{*}_{\gamma}}{\omega_{\gamma}}\right).
\end{equation}
By virtue of existence theorem, the equations obtained in \eqref{FfTbeta} make assure that there exists a 
unit speed time-like curve $\beta$ in $\mathbb E^3_1$ with Frenet-Serret apparatus $\left\{\kappa_{\beta},\tau_{\beta},T_{\beta},N_{\beta},B_{\beta}\right\}$ described as in (iii) in the theorem.
As previously mentioned, the proofs of (i) and (ii) in theorem can be made by taking $\varepsilon_0 =1,\varepsilon_1=1$ for the curve $\gamma$ exactly the same as the path we follow for proof of (iii).
\end{proof}

\begin{thm} \label{TeoT}
Let $\gamma$ be a time-like Frenet curve in $\mathbb E^3_1$ with Frenet-Serret apparatus \linebreak $\left\{\kappa_{\gamma},\tau_{\gamma},T_{\gamma},N_{\gamma},B_{\gamma}\right\}$. Then, there exist a unit speed space-like curve $\beta$ with Frenet-Serret apparatus $\left\{\kappa_{\beta},\tau_{\beta},T_{\beta},N_{\beta},B_{\beta}\right\}$ are given by 
\begin{itemize}
\item[(i)]
\begin{subequations} \label{Faofb4ALL}
\begin{eqnarray} \label{Faofb4a}
\kappa_{\beta}=\sqrt{{\tau_{\gamma}}^2-{\kappa_{\gamma}}^2}\quad \tau_{\beta}=\frac{{\kappa_{\gamma}}^2}{{\tau_{\gamma}}^2-{\kappa_{\gamma}}^2}{\left(\frac{\tau_{\gamma}}{\kappa_{\gamma}}\right)^{\prime}},\\ \label{Faofb4b}
T_{\beta}=N_{\gamma},\quad N_{\beta}= \frac{D^{*}_{\gamma}}{\sqrt{{\tau_{\gamma}}^2-{\kappa_{\gamma}}^2}},\quad %
B_{\beta}=\frac{D_{\gamma}}{\sqrt{{\tau_{\gamma}}^2-{\kappa_{\gamma}}^2}},
\end{eqnarray}
\end{subequations} 
such that $\left|\kappa\right|<\left|\tau\right|$. In case, $\beta$ is a space-like curve of \textit{type1} in $\mathbb E^3_1$.
\item[(ii)]
\begin{subequations} \label{Faofb5ALL}
\begin{eqnarray} \label{Faofb5a}
\kappa_{\beta}=\sqrt{{\kappa_{\gamma}}^2-{\tau_{\gamma}}^2}\quad \tau_{\beta}=\frac{{\kappa_{\gamma}}^2}{{\kappa_{\gamma}}^2-{\tau_{\gamma}}^2}{\left(\frac{\tau_{\gamma}}{\kappa_{\gamma}}\right)^{\prime}},\\ \label{Faofb5b}
T_{\beta}=N_{\gamma},\quad N_{\beta}= \frac{D^{*}_{\gamma}}{\sqrt{{\kappa_{\gamma}}^2-{\tau_{\gamma}}^2}},\quad %
B_{\beta}=\frac{D_{\gamma}}{\sqrt{{\kappa_{\gamma}}^2-{\tau_{\gamma}}^2}}
\end{eqnarray}
\end{subequations} 
such that $\left|\kappa\right|>\left|\tau\right|$. In case, $\beta$ is a space-like curve of \textit{type2} in $\mathbb E^3_1$.
\end{itemize}
Here, $D_{\gamma}$ and $D^{*}_{\gamma}$ are defined by \eqref{Dvgnn} and \eqref{cDvgnn}, respectively.
\end{thm}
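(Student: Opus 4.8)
The plan is to repeat the argument used for Theorem~\ref{TeoS} with $\varepsilon_{0}=-1$, so that cases~(i) and~(ii) here correspond to the two sub-cases of that proof according to the sign of $\tau_{\gamma}^{2}-\kappa_{\gamma}^{2}$. First I would fix the causal data: a time-like curve has space-like principal normal, so $\varepsilon_{0}=-1$, $\varepsilon_{1}=1$, and $\langle B_{\gamma},B_{\gamma}\rangle=-\varepsilon_{0}\varepsilon_{1}=1$; hence by \eqref{Dvgnn} and \eqref{cDvgnn} one has $D_{\gamma}=\tau_{\gamma}T_{\gamma}+\kappa_{\gamma}B_{\gamma}$ and $D^{*}_{\gamma}=\kappa_{\gamma}T_{\gamma}+\tau_{\gamma}B_{\gamma}$, while \eqref{wnn} gives $\omega_{\gamma}=\sqrt{\left|\kappa_{\gamma}^{2}-\tau_{\gamma}^{2}\right|}$. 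A short computation then yields $\langle D^{*}_{\gamma},D_{\gamma}\rangle=0$, $\langle D^{*}_{\gamma},D^{*}_{\gamma}\rangle=\tau_{\gamma}^{2}-\kappa_{\gamma}^{2}$ and $\langle D_{\gamma},D_{\gamma}\rangle=\kappa_{\gamma}^{2}-\tau_{\gamma}^{2}$, both orthogonal to $N_{\gamma}$. Thus on the open set where $\tau_{\gamma}^{2}>\kappa_{\gamma}^{2}$ the ordered triple $\left\{N_{\gamma},\,D^{*}_{\gamma}/\omega_{\gamma},\,D_{\gamma}/\omega_{\gamma}\right\}$ is an orthonormal frame of \textit{type1} signature, and where $\kappa_{\gamma}^{2}>\tau_{\gamma}^{2}$ it is of \textit{type2} signature; in either case I set $T_{\beta}=N_{\gamma}$, $N_{\beta}=D^{*}_{\gamma}/\omega_{\gamma}$, $B_{\beta}=D_{\gamma}/\omega_{\gamma}$ and verify $B_{\beta}=T_{\beta}\times N_{\beta}$ from the product rules following from the relations displayed just after \eqref{Ffnn} with $\varepsilon_{0}=-1$, $\varepsilon_{1}=1$.

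The computational core is differentiating this frame along $\gamma$. Using \eqref{Ffnn} one gets at once $N_{\gamma}^{\prime}=D^{*}_{\gamma}$, and for the other two derivatives I would use the identities $\kappa_{\gamma}\kappa_{\gamma}^{\prime}-\tau_{\gamma}\tau_{\gamma}^{\prime}=\frac{1}{2}\left(\kappa_{\gamma}^{2}-\tau_{\gamma}^{2}\right)^{\prime}$ and $\kappa_{\gamma}\tau_{\gamma}^{\prime}-\tau_{\gamma}\kappa_{\gamma}^{\prime}=\kappa_{\gamma}^{2}\left(\tau_{\gamma}/\kappa_{\gamma}\right)^{\prime}$, which after inserting $\omega_{\gamma}$ become $\left(\kappa_{\gamma}/\omega_{\gamma}\right)^{\prime}=\mp\tau_{\gamma}\sigma$ and $\left(\tau_{\gamma}/\omega_{\gamma}\right)^{\prime}=\mp\kappa_{\gamma}\sigma$, where $\sigma$ denotes $\sigma_{t1}$ in case~(i) and $\sigma_{t2}$ in case~(ii) from \eqref{TSh}. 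Substituting back produces, in parallel with \eqref{FfTbeta},
\[
N_{\gamma}^{\prime}=D^{*}_{\gamma},\qquad \left(\frac{D^{*}_{\gamma}}{\omega_{\gamma}}\right)^{\prime}=\mp\,\omega_{\gamma}N_{\gamma}\mp\sigma\,\omega_{\gamma}\left(\frac{D_{\gamma}}{\omega_{\gamma}}\right),\qquad \left(\frac{D_{\gamma}}{\omega_{\gamma}}\right)^{\prime}=\mp\,\sigma\,\omega_{\gamma}\left(\frac{D^{*}_{\gamma}}{\omega_{\gamma}}\right),
\]
with upper signs for case~(i) and lower signs for case~(ii). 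Read against \eqref{Ffnn}, this is exactly the Frenet system of a space-like curve of \textit{type1} in case~(i), resp. of \textit{type2} in case~(ii), with curvature $\kappa_{\beta}=\omega_{\gamma}=\sqrt{\left|\tau_{\gamma}^{2}-\kappa_{\gamma}^{2}\right|}$ and torsion $\tau_{\beta}=\omega_{\gamma}\sigma$, that is, $\tau_{\beta}=\frac{\kappa_{\gamma}^{2}}{\tau_{\gamma}^{2}-\kappa_{\gamma}^{2}}\left(\frac{\tau_{\gamma}}{\kappa_{\gamma}}\right)^{\prime}$ in case~(i) and $\tau_{\beta}=\frac{\kappa_{\gamma}^{2}}{\kappa_{\gamma}^{2}-\tau_{\gamma}^{2}}\left(\frac{\tau_{\gamma}}{\kappa_{\gamma}}\right)^{\prime}$ in case~(ii). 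By the fundamental existence theorem for non-null curves in $\mathbb E^{3}_{1}$ there is then a unit-speed space-like curve $\beta$ with precisely this Frenet-Serret apparatus; since $T_{\beta}=N_{\gamma}$ by construction, $\beta$ is a natural mate of $\gamma$ (and it is unique up to translation, as $T_{\beta}=N_{\gamma}$ determines $\beta$), which establishes (i) and (ii).

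I expect the only genuine difficulty to be the sign bookkeeping: keeping $(\varepsilon_{0},\varepsilon_{1})$ straight simultaneously for $\gamma$ (fixed at $(-1,1)$) and for $\beta$ (equal to $(1,1)$ in case~(i) and $(1,-1)$ in case~(ii)), so that the orthonormal frame and its derivatives land on \eqref{Ffnn} with the correct $\varepsilon$-pattern --- in particular that the orientation in $B_{\beta}=T_{\beta}\times N_{\beta}$ and the sign of $\tau_{\beta}$ come out as asserted. Everything else is routine and mirrors the proof of Theorem~\ref{TeoS} line for line, so it would suffice to carry out case~(i) in detail and note that case~(ii) is entirely analogous.
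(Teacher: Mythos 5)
Your proposal is correct and takes essentially the same route as the paper: the paper gives no separate proof of Theorem~\ref{TeoT}, relying implicitly on the argument for Theorem~\ref{TeoS} (form the orthonormal frame $\left\{N_{\gamma},\,D^{*}_{\gamma}/\omega_{\gamma},\,D_{\gamma}/\omega_{\gamma}\right\}$, differentiate it via the Frenet equations and the $\sigma$-identities, and invoke the existence theorem), which is exactly what you carry out with $(\varepsilon_{0},\varepsilon_{1})=(-1,1)$. Your sign computations for both cases check out against \eqref{Ffnn}, \eqref{Dvgnn}--\eqref{cDvgnn} and \eqref{TSh}, so nothing further is needed.
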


\begin{rem}
We would like to note that in \cite{DCA2018}, the authors explained why they chose to name the pair of 
$\left\{\gamma,\beta\right\}$ as the natural mates in $\mathbb E^3$. For the same reason, we will call the curve 
$\beta$ as the natural mate of $\gamma$ in $\mathbb E^3_1$ instead of the principal-direction curve defined in \cite{CKA2012}.
\end{rem}

The followings are some consequences obtained easily from Theorem \ref{TeoS} and Theorem \ref{TeoT}.

\begin{cor}
[\cite{CKA2012}, Lemma 4.1] A non-null Frenet curve $\gamma$ in $\mathbb E^3_1$ is a general helix 
with $\tau_{\gamma} \neq \kappa_{\gamma}$ if and only if its natural mate $\beta$ is a plane curve.
\end{cor}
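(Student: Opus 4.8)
The plan is to deduce the statement directly from the torsion formulas collected in Theorem~\ref{TeoS} and Theorem~\ref{TeoT}, after reducing the two geometric conditions in the corollary to algebraic ones. First I would record the two elementary facts that turn the equivalence into a one-line computation. Fact (a): a non-null Frenet curve $\gamma$ is a general helix exactly when the ratio $\tau_\gamma/\kappa_\gamma$ is constant; this is the Lorentzian analogue of Lancret's theorem, obtained by differentiating $\langle T_\gamma,d\rangle=\mathrm{const}$ for the fixed axis direction $d$ and using the Frenet equations \eqref{Ffnn} (with the hypothesis $\tau_\gamma\neq\kappa_\gamma$ keeping us away from the degenerate null-axis configuration). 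Fact (b): a non-null curve $\beta$ carrying a Frenet frame lies in a non-degenerate plane if and only if $\tau_\beta\equiv 0$, since $\tau_\beta\equiv 0$ forces $B_\beta^{\prime}=0$ by \eqref{Ffnn}, so $\beta$ is contained in the affine plane through $\beta(s_0)$ orthogonal to the constant vector $B_\beta$, and the converse is immediate.

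Next I would invoke Theorems~\ref{TeoS} and~\ref{TeoT}: whatever the causal characters of $\gamma$ and of its natural mate $\beta$, the torsion of $\beta$ takes the uniform shape
\begin{equation} \nonumber
\tau_\beta=\frac{{\kappa_\gamma}^2}{\Delta}\left(\frac{\tau_\gamma}{\kappa_\gamma}\right)^{\prime},\qquad \Delta\in\left\{{\kappa_\gamma}^2-{\tau_\gamma}^2,\ {\tau_\gamma}^2-{\kappa_\gamma}^2,\ {\kappa_\gamma}^2+{\tau_\gamma}^2\right\},
\end{equation}
where the admissible value of $\Delta$ is the one singled out by the causal type in those theorems. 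In every admissible case $\Delta$ is nowhere zero: it is $\pm({\kappa_\gamma}^2-{\tau_\gamma}^2)$, which cannot vanish under the hypothesis $\tau_\gamma\neq\kappa_\gamma$ together with $\kappa_\gamma>0$ and $\tau_\gamma\neq 0$, or else it is ${\kappa_\gamma}^2+{\tau_\gamma}^2>0$; and since also $\kappa_\gamma>0$, the coefficient ${\kappa_\gamma}^2/\Delta$ never vanishes.

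The proof then closes: $\beta$ is a plane curve $\iff \tau_\beta\equiv 0 \iff \bigl(\tau_\gamma/\kappa_\gamma\bigr)^{\prime}\equiv 0 \iff \tau_\gamma/\kappa_\gamma$ is constant $\iff \gamma$ is a general helix, the last step being Fact~(a); and the hypothesis $\tau_\gamma\neq\kappa_\gamma$ is precisely what guarantees both that the natural mate $\beta$ is a genuine non-null curve to which Theorems~\ref{TeoS}--\ref{TeoT} apply and that $\Delta\neq 0$. I do not expect a genuine obstacle here; the only points requiring a line of care are the uniform treatment of the several causal cases — handled by the single observation that ${\kappa_\gamma}^2/\Delta\neq 0$ in all of them — and, depending on the precise definition of \emph{general helix} adopted in the Lorentzian setting, a brief justification or citation for the Lancret-type characterization used in Fact~(a).
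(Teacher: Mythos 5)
Your proposal is correct and follows exactly the route the paper intends: the corollary is stated there without proof as an immediate consequence of Theorems~\ref{TeoS} and~\ref{TeoT}, namely that $\tau_\beta$ is a nowhere-zero multiple of $\left(\tau_\gamma/\kappa_\gamma\right)^{\prime}$, so $\tau_\beta\equiv 0$ iff $\tau_\gamma/\kappa_\gamma$ is constant. The only cosmetic caveat is that $\tau_\gamma\neq\kappa_\gamma$ alone does not literally exclude $\tau_\gamma=-\kappa_\gamma$ (so $\Delta={\kappa_\gamma}^2-{\tau_\gamma}^2$ could still vanish); the non-vanishing of $\Delta$ really comes from the case hypotheses $\left|\kappa_\gamma\right|\gtrless\left|\tau_\gamma\right|$ built into Theorems~\ref{TeoS} and~\ref{TeoT}, a looseness inherited from the paper's own statement.
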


\begin{cor}
[\cite{CKA2012}, Corollary 4.7] A space-like Frenet curve $\gamma$ of \textit{type1} in $\mathbb E^3_1$ 
is a helix with $\left|\kappa\right|>\left|\tau\right|$ or $\left|\kappa\right|<\left|\tau\right|$ 
if and only if its space-like natural mate $\beta$ is a circle in $\mathbb E^2$ or a space-like hyperbola in 
$\mathbb E^2_1$, respectively.
\end{cor}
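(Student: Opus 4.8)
The plan is to read off $\kappa_{\beta}$ and $\tau_{\beta}$ directly from Theorem \ref{TeoS}, with $\varepsilon_0=\varepsilon_1=1$ since $\gamma$ is space-like of \textit{type1}, and then to translate ``$\gamma$ is a helix'' (that is, $\kappa_{\gamma}$ and $\tau_{\gamma}$ both constant) into ``$\kappa_{\beta}$ is constant and $\tau_{\beta}=0$'' and back, while keeping careful track of the causal characters involved.

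First I would dispose of the case $|\kappa_{\gamma}|>|\tau_{\gamma}|$. By Theorem \ref{TeoS}(i) the mate $\beta$ is space-like of \textit{type1} with $\kappa_{\beta}=\sqrt{\kappa_{\gamma}^2-\tau_{\gamma}^2}$ and $\tau_{\beta}=\dfrac{\kappa_{\gamma}^2}{\kappa_{\gamma}^2-\tau_{\gamma}^2}\bigl(\tfrac{\tau_{\gamma}}{\kappa_{\gamma}}\bigr)'$. If $\gamma$ is a helix, then $\kappa_{\gamma},\tau_{\gamma}$ are constant, so $\tau_{\beta}\equiv 0$ and $\kappa_{\beta}$ is a positive constant; from $\tau_{\beta}=0$ the Frenet equations \eqref{Ffnn} give $B_{\beta}'=0$, so $B_{\beta}$ is a constant vector, and it is time-like since the binormal of a space-like \textit{type1} curve satisfies $\langle B_{\beta},B_{\beta}\rangle=-1$. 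Hence $\beta$ lies in the space-like affine plane $\Pi\cong\mathbb{E}^2$ orthogonal to $B_{\beta}$, where, having constant curvature, it is a circle. Conversely, if $\beta$ is a circle in $\mathbb{E}^2$, then $\kappa_{\beta}$ is a nonzero constant and $\tau_{\beta}=0$; since $\gamma$ is a Frenet curve $\kappa_{\gamma}>0$, and $\kappa_{\gamma}^2-\tau_{\gamma}^2>0$ by hypothesis, so $\tau_{\beta}=0$ forces $(\tau_{\gamma}/\kappa_{\gamma})'=0$, i.e.\ $\tau_{\gamma}/\kappa_{\gamma}=c$ with $c^2<1$. Then $\kappa_{\beta}^2=(1-c^2)\kappa_{\gamma}^2$ with $1-c^2>0$, so $\kappa_{\gamma}$ is a positive constant and $\tau_{\gamma}=c\kappa_{\gamma}$ is constant; thus $\gamma$ is a helix.

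The case $|\kappa_{\gamma}|<|\tau_{\gamma}|$ is entirely parallel, now using Theorem \ref{TeoS}(ii): $\beta$ is space-like of \textit{type2}, with $\kappa_{\beta}=\sqrt{\tau_{\gamma}^2-\kappa_{\gamma}^2}$ and $\tau_{\beta}=\dfrac{\kappa_{\gamma}^2}{\tau_{\gamma}^2-\kappa_{\gamma}^2}\bigl(\tfrac{\tau_{\gamma}}{\kappa_{\gamma}}\bigr)'$. If $\gamma$ is a helix, then $\tau_{\beta}=0$ and $\kappa_{\beta}$ is constant, and the constant vector $B_{\beta}$ is now space-like, since the binormal of a space-like \textit{type2} curve satisfies $\langle B_{\beta},B_{\beta}\rangle=1$; hence $\beta$ lies in a Lorentzian plane $\cong\mathbb{E}^2_1$, where a space-like curve of constant curvature is a space-like hyperbola. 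For the converse, $\tau_{\beta}=0$ again gives $\tau_{\gamma}/\kappa_{\gamma}=c$ constant (now with $c^2>1$), and $\kappa_{\beta}^2=(c^2-1)\kappa_{\gamma}^2$ with $c^2-1>0$ then forces $\kappa_{\gamma}$, hence $\tau_{\gamma}$, to be constant.

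I expect the only delicate point to be the geometric identification rather than the algebra: one must be sure that vanishing torsion places $\beta$ in an affine plane whose induced metric is Euclidean when $B_{\beta}$ is time-like and Lorentzian when $B_{\beta}$ is space-like, and then invoke the classification of constant-curvature curves in $\mathbb{E}^2$ (circles) and in $\mathbb{E}^2_1$ (for a space-like curve, the space-like branch of a hyperbola). Everything else rests on the elementary remark that, when $\tau_{\gamma}/\kappa_{\gamma}$ is constant and $|\kappa_{\gamma}|\neq|\tau_{\gamma}|$, constancy of $\kappa_{\gamma}^2-\tau_{\gamma}^2$ is equivalent to constancy of $\kappa_{\gamma}$, together with $\kappa_{\gamma}>0$ for a Frenet curve.
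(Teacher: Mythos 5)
Your proposal is correct and is exactly the derivation the paper intends: it presents this corollary as an immediate consequence of Theorem \ref{TeoS}, and your argument simply spells out that reading, translating ``$\kappa_{\gamma},\tau_{\gamma}$ constant'' into ``$\kappa_{\beta}$ constant, $\tau_{\beta}=0$'' via \eqref{Faofb1a}--\eqref{Faofb2a} and identifying the causal character of the plane through the binormal $B_{\beta}$. The converse direction and the classification of constant-curvature planar curves in $\mathbb E^2$ and $\mathbb E^2_1$ are handled correctly.
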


\begin{cor}
[\cite{CKA2012}, Corollary 4.9] A time-like Frenet curve $\gamma$ in $\mathbb E^3_1$ 
is a helix with $\left|\kappa\right|<\left|\tau\right|$ or $\left|\kappa\right|>\left|\tau\right|$ 
if and only if its space-like natural mate $\beta$ is a circle in $\mathbb E^2$ or a space-like hyperbola in 
$\mathbb E^2_1$, respectively.
\end{cor}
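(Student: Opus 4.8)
The plan is to deduce the statement from Theorem~\ref{TeoT}, in exact parallel with the way the preceding corollary (for space-like $\gamma$) follows from Theorem~\ref{TeoS}. The first ingredient is a translation of the geometric terminology into Frenet data. If $\beta$ is a space-like Frenet curve with $\tau_\beta=0$, then by \eqref{Ffnn} its binormal $B_\beta$ is constant, so $\beta$ lies in the affine plane through one of its points orthogonal to $B_\beta$, i.e.\ in the plane spanned by $T_\beta$ and $N_\beta$; when $\beta$ is of \textit{type1} this plane is positive definite, a copy of $\mathbb E^2$, and a curve of constant curvature in it is an arc of a circle, whereas when $\beta$ is of \textit{type2} the plane is Lorentzian, a copy of $\mathbb E^2_1$, and a space-like curve of constant curvature in it is a branch of a space-like hyperbola. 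Thus ``$\beta$ is a circle in $\mathbb E^2$'' means ``$\beta$ is space-like of \textit{type1} with $\tau_\beta=0$ and $\kappa_\beta$ a positive constant,'' and ``$\beta$ is a space-like hyperbola in $\mathbb E^2_1$'' means ``$\beta$ is space-like of \textit{type2} with $\tau_\beta=0$ and $\kappa_\beta$ a positive constant.'' I would also record (as was used for the previous two corollaries) that, since $\kappa_\gamma>0$ and $\tau_\gamma^2-\kappa_\gamma^2\neq 0$, the formula for $\tau_\beta$ in Theorem~\ref{TeoT} shows $\tau_\beta=0$ if and only if $(\tau_\gamma/\kappa_\gamma)'=0$, i.e.\ $\tau_\gamma/\kappa_\gamma$ is constant.

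For the forward implication, suppose $\gamma$ is a helix, so $\kappa_\gamma$ is a positive constant and $\tau_\gamma$ is a nonzero constant. If $|\kappa_\gamma|<|\tau_\gamma|$, then case~(i) of Theorem~\ref{TeoT} applies and gives a space-like natural mate $\beta$ of \textit{type1} with $\kappa_\beta=\sqrt{\tau_\gamma^2-\kappa_\gamma^2}$ a positive constant and $\tau_\beta=\frac{\kappa_\gamma^2}{\tau_\gamma^2-\kappa_\gamma^2}\left(\frac{\tau_\gamma}{\kappa_\gamma}\right)'=0$, since $\tau_\gamma/\kappa_\gamma$ is constant; by the first paragraph $\beta$ is a circle in $\mathbb E^2$. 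If $|\kappa_\gamma|>|\tau_\gamma|$, case~(ii) of Theorem~\ref{TeoT} gives a space-like natural mate $\beta$ of \textit{type2} with $\kappa_\beta=\sqrt{\kappa_\gamma^2-\tau_\gamma^2}$ a positive constant and $\tau_\beta=0$, hence $\beta$ is a space-like hyperbola in $\mathbb E^2_1$.

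For the converse, assume the natural mate $\beta$ is a circle in $\mathbb E^2$; then $\beta$ is space-like of \textit{type1}, so by Theorem~\ref{TeoT} we are necessarily in case~(i), in particular $|\kappa_\gamma|<|\tau_\gamma|$, and $\kappa_\beta=\sqrt{\tau_\gamma^2-\kappa_\gamma^2}$, $\tau_\beta=\frac{\kappa_\gamma^2}{\tau_\gamma^2-\kappa_\gamma^2}\left(\frac{\tau_\gamma}{\kappa_\gamma}\right)'$. Being a circle, $\beta$ has $\tau_\beta=0$, which by the remark in the first paragraph forces $\tau_\gamma/\kappa_\gamma=c$ for a constant $c$ with $c^2>1$; then $\kappa_\beta=|\kappa_\gamma|\sqrt{c^2-1}$, and since $\kappa_\beta$ is a nonzero constant and $\sqrt{c^2-1}$ is a nonzero constant, $\kappa_\gamma$ is constant, whence $\tau_\gamma=c\,\kappa_\gamma$ is constant and $\gamma$ is a helix with $|\kappa_\gamma|<|\tau_\gamma|$. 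The case in which $\beta$ is a space-like hyperbola in $\mathbb E^2_1$ is handled identically, with case~(ii) of Theorem~\ref{TeoT} replacing case~(i).

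The step I expect to need the most care is the first paragraph: one has to verify that a planar space-like Frenet curve really sits in a $2$-plane whose causal character is dictated by the \textit{type} of the curve, and to invoke the classification of constant-curvature space-like curves in $\mathbb E^2$ and in $\mathbb E^2_1$ (circular arcs and hyperbola branches, respectively). Once that dictionary is in place the corollary is a direct substitution into the formulas of Theorem~\ref{TeoT}, and the ``respectively'' matching is precisely the matching between cases~(i)/(ii) of that theorem and the \textit{type1}/\textit{type2} of $\beta$.
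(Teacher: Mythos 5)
Your proposal is correct and follows exactly the route the paper intends: the paper states this corollary without proof as an immediate consequence of Theorem~\ref{TeoT} (citing \cite{CKA2012}), and your argument simply fills in the details of that derivation, correctly matching case~(i)/(ii) of Theorem~\ref{TeoT} with the \textit{type1}/\textit{type2} causal character of the osculating plane and hence with the circle/hyperbola dichotomy. The translation in your first paragraph (planar constant-curvature space-like curve in a definite vs.\ Lorentzian plane) is the only substantive verification needed, and it is handled correctly.
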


\begin{cor}
[\cite{CKA2012}, Proposition 5.3] A non-null Frenet curve $\gamma$ in $\mathbb E^3_1$ is a slant helix if and only if its natural mate 
is a general helix with $\frac{\tau_{\beta}}{\kappa_{\beta}}=\sigma_{\gamma}$.
\end{cor}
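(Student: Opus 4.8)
The plan is to read the curvature and torsion of the natural mate $\beta$ straight off the explicit Frenet--Serret data recorded in Theorem \ref{TeoS} and Theorem \ref{TeoT}, form the quotient $\tau_{\beta}/\kappa_{\beta}$, and recognize it as precisely the slant-helix function $\sigma_{\gamma}$ of $\gamma$ appearing in Theorem \ref{SShteo} and Theorem \ref{TShteo}. Once that identity is visible, the statement reduces to combining those slant-helix criteria with the Lorentzian version of Lancret's theorem, namely that a non-null Frenet curve is a general helix exactly when the ratio of its torsion to its curvature is constant.

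First I would run through the finitely many admissible situations. If $\gamma$ is space-like of \textit{type1}, then $N_{\gamma}$ is space-like, hence $\beta$ is a space-like curve, and \eqref{Faofb1a} and \eqref{Faofb2a} give
$$\frac{\tau_{\beta}}{\kappa_{\beta}}=\frac{\kappa_{\gamma}^{\,2}}{\bigl(\kappa_{\gamma}^{\,2}-\tau_{\gamma}^{\,2}\bigr)^{3/2}}\Bigl(\frac{\tau_{\gamma}}{\kappa_{\gamma}}\Bigr)'\quad\text{or}\quad\frac{\tau_{\beta}}{\kappa_{\beta}}=\frac{\kappa_{\gamma}^{\,2}}{\bigl(\tau_{\gamma}^{\,2}-\kappa_{\gamma}^{\,2}\bigr)^{3/2}}\Bigl(\frac{\tau_{\gamma}}{\kappa_{\gamma}}\Bigr)'$$
according as $|\kappa_{\gamma}|>|\tau_{\gamma}|$ or $|\kappa_{\gamma}|<|\tau_{\gamma}|$, i.e. $\tau_{\beta}/\kappa_{\beta}=\sigma_{s1}$ or $\sigma_{s2}$ in the notation of \eqref{Shtype1}. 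If $\gamma$ is space-like of \textit{type2}, then $N_{\gamma}$ is time-like, $\beta$ is time-like, and \eqref{Faofb3a} yields $\tau_{\beta}/\kappa_{\beta}=\dfrac{\kappa_{\gamma}^{\,2}}{(\kappa_{\gamma}^{\,2}+\tau_{\gamma}^{\,2})^{3/2}}(\tau_{\gamma}/\kappa_{\gamma})'=\sigma_{s3}$ of \eqref{Shtype2}. Finally, if $\gamma$ is time-like, then $N_{\gamma}$ is space-like, $\beta$ is space-like, and \eqref{Faofb4a}, \eqref{Faofb5a} give $\tau_{\beta}/\kappa_{\beta}=\sigma_{t1}$ or $\sigma_{t2}$ of \eqref{TSh} according as $|\kappa_{\gamma}|<|\tau_{\gamma}|$ or $|\kappa_{\gamma}|>|\tau_{\gamma}|$. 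In each case $\tau_{\beta}/\kappa_{\beta}$ is \emph{identically} equal to the slant-helix function $\sigma_{\gamma}$ attached to $\gamma$.

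Then I would assemble the two implications. If $\gamma$ is a slant helix, Theorem \ref{SShteo} (space-like case) or Theorem \ref{TShteo} (time-like case) makes $\sigma_{\gamma}$ constant, hence $\tau_{\beta}/\kappa_{\beta}$ is constant, hence $\beta$ is a general helix by Lancret, necessarily with $\tau_{\beta}/\kappa_{\beta}=\sigma_{\gamma}$. Conversely, if $\beta$ is a general helix, then $\tau_{\beta}/\kappa_{\beta}$ is constant, so $\sigma_{\gamma}$ is constant, and the same two theorems return that $\gamma$ is a slant helix. I would also note that $\beta$ is a genuine Frenet curve because $\kappa_{\gamma}^{\,2}\neq\tau_{\gamma}^{\,2}$ on $I$ in the relevant families, so $\kappa_{\beta}>0$ and the slant-helix criteria apply without extra hypotheses.

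The step I expect to be the main obstacle is purely the bookkeeping of causal characters: one must match each of the five expressions for $\tau_{\beta}/\kappa_{\beta}$ with the correct member of $\{\sigma_{s1},\sigma_{s2},\sigma_{s3},\sigma_{t1},\sigma_{t2}\}$, and one must invoke the Lancret-type characterization of general helices in the form valid for the causal type of $\beta$ actually produced (space-like of \textit{type1}, space-like of \textit{type2}, or time-like). In particular the degenerate-axis situation $\tau_{\beta}=\pm\kappa_{\beta}$ for a space-like $\beta$ of \textit{type1} must be acknowledged, exactly as the accompanying corollary on general helices excludes $\tau_{\gamma}=\kappa_{\gamma}$; away from that locus the argument above is conclusive.
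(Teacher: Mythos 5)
Your proposal is correct and is exactly the argument the paper intends: the corollary is stated there without proof as an immediate consequence of Theorems \ref{TeoS} and \ref{TeoT}, and your identification of $\tau_{\beta}/\kappa_{\beta}$ with the appropriate function among $\sigma_{s1},\sigma_{s2},\sigma_{s3},\sigma_{t1},\sigma_{t2}$ from Theorems \ref{SShteo} and \ref{TShteo}, followed by the Lancret-type characterization of general helices, is the intended route. Your case bookkeeping (causal type of $N_{\gamma}$ determining the causal type of $\beta$, and the sign of $\kappa_{\gamma}^{2}-\tau_{\gamma}^{2}$ selecting the sub-case) matches the structure of the cited theorems.
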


Now, we also have the following characterizations in term of natural mates by applying Theorem \ref{TeoS} and Theorem \ref{TeoT}.

\begin{cor}
A non-null curve $\gamma$ in $\mathbb E^3_1$ with curvature $\kappa_{\gamma}$ and 
torsion $\tau_{\gamma}$ is a spherical curve lying on a sphere on radius $r$ if and only if the curvature
 $\kappa_{\beta}$ and torsion $\tau_{\beta}$ of its natural mate $\beta$ satisfy one of the followings
\begin{enumerate}
\item
\begin{equation}
\frac{\kappa^{\prime}_{\beta}}{\kappa_{\beta}}=-\tau_{\beta}\left(\frac{\tau_{\gamma}}{\kappa_{\gamma}}\right)\pm \tau_{\gamma}%
\sqrt{1-r^2 {\kappa_{\gamma}}^2},
\end{equation}
where $\kappa_{\beta},\tau_{\beta}$ is defined as in \eqref{Faofb1a}.
\item
\begin{equation}
\frac{\kappa^{\prime}_{\beta}}{\kappa_{\beta}}=\tau_{\beta}\left(\frac{\tau_{\gamma}}{\kappa_{\gamma}}\right)\pm \tau_{\gamma}%
\sqrt{1+r^2 {\kappa_{\gamma}}^2},
\end{equation}
where $\kappa_{\beta},\tau_{\beta}$ is defined as in \eqref{Faofb2a}.
\item
\begin{equation}
\frac{\kappa^{\prime}_{\beta}}{\kappa_{\beta}}=\tau_{\beta}\left(\frac{\tau_{\gamma}}{\kappa_{\gamma}}\right)\pm \tau_{\gamma}%
\sqrt{r^2 {\kappa_{\gamma}}^2-1},
\end{equation}
where $\kappa_{\beta},\tau_{\beta}$ is defined as in \eqref{Faofb3a}.
\item
\begin{equation}
\frac{\kappa^{\prime}_{\beta}}{\kappa_{\beta}}=\tau_{\beta}\left(\frac{\tau_{\gamma}}{\kappa_{\gamma}}\right)\pm \tau_{\gamma}%
\sqrt{r^2 {\kappa_{\gamma}}^2-1},
\end{equation}
where $\kappa_{\beta},\tau_{\beta}$ is defined as in \eqref{Faofb4a}.
\item
\begin{equation}
\frac{\kappa^{\prime}_{\beta}}{\kappa_{\beta}}=-\tau_{\beta}\left(\frac{\tau_{\gamma}}{\kappa_{\gamma}}\right)\pm \tau_{\gamma}%
\sqrt{1-r^2 {\kappa_{\gamma}}^2},
\end{equation}
where $\kappa_{\beta},\tau_{\beta}$ is defined as in \eqref{Faofb5a}.
\end{enumerate}
\end{cor}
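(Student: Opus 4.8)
The proof rests on two independent facts: a classical ``position--vector'' characterization of the curves of $\mathbb{E}^3_1$ lying on a pseudo-sphere, and the explicit Frenet data of the natural mate recorded in Theorems~\ref{TeoS} and \ref{TeoT}. Note first that the five items correspond exactly to the five configurations there: items (1)--(3) are the three cases of Theorem~\ref{TeoS} (so $\gamma$ is space-like, respectively of type~1 with $|\kappa_\gamma|>|\tau_\gamma|$, of type~1 with $|\kappa_\gamma|<|\tau_\gamma|$, or of type~2), and items (4)--(5) are the two cases of Theorem~\ref{TeoT} (so $\gamma$ is time-like, respectively with $|\kappa_\gamma|<|\tau_\gamma|$ or $|\kappa_\gamma|>|\tau_\gamma|$). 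I will treat the five in parallel and reduce each to the displayed relation.

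For the first fact, fix a point $p$ and put $c=\langle\gamma-p,\gamma-p\rangle$, so that $\gamma$ lies on a pseudo-sphere of radius $r$ centered at $p$ exactly when $c$ is the constant $\pm r^{2}$ (the sign, hence the type of pseudo-sphere, being dictated by the causal character of $\gamma$). Differentiating $\langle\gamma-p,\gamma-p\rangle=c$ three times and using \eqref{Ffnn} one obtains, successively, $\langle T_\gamma,\gamma-p\rangle=0$, then $\langle N_\gamma,\gamma-p\rangle=-\varepsilon_0\varepsilon_1/\kappa_\gamma$, and then $\langle B_\gamma,\gamma-p\rangle=\frac{1}{\tau_\gamma}\left(\frac{1}{\kappa_\gamma}\right)'$. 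Expanding $\gamma-p$ in the frame $\{T_\gamma,N_\gamma,B_\gamma\}$ and taking the scalar square then yields
\[
c=\frac{\varepsilon_1}{\kappa_\gamma^{2}}-\varepsilon_0\varepsilon_1\left[\frac{1}{\tau_\gamma}\left(\frac{1}{\kappa_\gamma}\right)'\right]^{2}.
\]
(Differentiating once more returns the closure identity $\left(\frac{1}{\tau_\gamma}\left(\frac{1}{\kappa_\gamma}\right)'\right)'=\varepsilon_0\,\tau_\gamma/\kappa_\gamma$ of Theorems~\ref{SScteo} and \ref{TScteo}, so no information is lost.) Substituting $\left(\frac{1}{\kappa_\gamma}\right)'=-\kappa_\gamma'/\kappa_\gamma^{2}$ and $c=\pm r^{2}$, the displayed identity rearranges to
\[
\left(\frac{\kappa_\gamma'}{\kappa_\gamma}\right)^{2}=\varepsilon_0\,\tau_\gamma^{2}\,(1-\varepsilon_1 c\,\kappa_\gamma^{2}),\qquad c=\pm r^{2},
\]
hence to $\frac{\kappa_\gamma'}{\kappa_\gamma}=\pm\,\tau_\gamma\sqrt{\varepsilon_0(1-\varepsilon_1 c\,\kappa_\gamma^{2})}$; inserting the values of $\varepsilon_0,\varepsilon_1$ and the correct pseudo-sphere for each configuration makes the radicand collapse to $1-r^{2}\kappa_\gamma^{2}$, $1+r^{2}\kappa_\gamma^{2}$, or $r^{2}\kappa_\gamma^{2}-1$, matching the radicand of items (1)--(5). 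Conversely this scalar ODE reconstructs $p$ from $\gamma$ and certifies that $\gamma$ is spherical, so ``spherical of radius $r$'' is \emph{equivalent} to the displayed ODE in $\kappa_\gamma,\tau_\gamma$.

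For the second fact, inspect \eqref{Faofb1a}, \eqref{Faofb2a}, \eqref{Faofb3a}, \eqref{Faofb4a}, \eqref{Faofb5a}: in every case $\kappa_\beta^{2}=\kappa_\gamma^{2}\,g$ with $g$ one of $1-(\tau_\gamma/\kappa_\gamma)^{2}$, $(\tau_\gamma/\kappa_\gamma)^{2}-1$, $1+(\tau_\gamma/\kappa_\gamma)^{2}$, and $\tau_\beta=\dfrac{\kappa_\gamma^{2}}{\kappa_\beta^{2}}\left(\dfrac{\tau_\gamma}{\kappa_\gamma}\right)'$. Logarithmic differentiation of $\kappa_\beta^{2}=\kappa_\gamma^{2}g$ gives $\dfrac{\kappa_\beta'}{\kappa_\beta}=\dfrac{\kappa_\gamma'}{\kappa_\gamma}+\dfrac{g'}{2g}$; writing $\rho=\tau_\gamma/\kappa_\gamma$ one checks in one line that $\dfrac{g'}{2g}=\pm\dfrac{\rho\rho'}{g}=\pm\tau_\beta\,\dfrac{\tau_\gamma}{\kappa_\gamma}$, the sign being $-$ exactly when $g=1-\rho^{2}$, i.e.\ in items (1) and (5) where $\kappa_\beta=\sqrt{\kappa_\gamma^{2}-\tau_\gamma^{2}}$, and $+$ in the other three. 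Thus in all cases
\[
\frac{\kappa_\gamma'}{\kappa_\gamma}=\frac{\kappa_\beta'}{\kappa_\beta}\mp\tau_\beta\,\frac{\tau_\gamma}{\kappa_\gamma},
\]
with the sign opposite to the one just described.

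Finally I combine the two: substituting the value of $\kappa_\gamma'/\kappa_\gamma$ from the first fact into the identity of the second produces, in each of the five configurations, precisely the stated relation $\frac{\kappa_\beta'}{\kappa_\beta}=\pm\tau_\beta\frac{\tau_\gamma}{\kappa_\gamma}\pm\tau_\gamma\sqrt{\,\cdot\,}$; since every step is an equivalence, the converse direction is automatic. I expect the only real difficulty to be the sign and reality bookkeeping: for each causal type of $\gamma$ one must select the correct pseudo-sphere ($c=+r^{2}$ versus $c=-r^{2}$), match it with the right item of Theorem~\ref{TeoS} or \ref{TeoT} together with its $|\kappa_\gamma|\gtrless|\tau_\gamma|$ clause, and check that the radicand is $\ge 0$ so that the $\pm$ genuinely parametrizes spherical curves --- this last point being exactly where the inequality hypotheses of Theorems~\ref{SScteo}--\ref{TScteo} enter. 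Once this set-up is fixed, the computations themselves are short.
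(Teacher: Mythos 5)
Your overall strategy is the right one, and it is in fact the only argument available here: the paper gives no proof of this corollary at all (it simply defers to the Euclidean computation in the cited reference), so your attempt supplies what the paper omits. Your two ingredients are correctly derived. Differentiating $\langle\gamma-p,\gamma-p\rangle=c$ with the frame \eqref{Ffnn} does give $c=\varepsilon_1\kappa_\gamma^{-2}-\varepsilon_0\varepsilon_1\bigl[\tfrac{1}{\tau_\gamma}(\tfrac{1}{\kappa_\gamma})'\bigr]^2$, hence $\bigl(\kappa_\gamma'/\kappa_\gamma\bigr)^2=\varepsilon_0\,\tau_\gamma^2\,(1-\varepsilon_1 c\,\kappa_\gamma^2)$; and the logarithmic-differentiation identity $\kappa_\beta'/\kappa_\beta=\kappa_\gamma'/\kappa_\gamma\mp\tau_\beta(\tau_\gamma/\kappa_\gamma)$, with the minus sign exactly in the cases \eqref{Faofb1a} and \eqref{Faofb5a}, reproduces the first term of all five items correctly.

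The gap is the step you explicitly wave through: ``inserting the values of $\varepsilon_0,\varepsilon_1$ and the correct pseudo-sphere \dots makes the radicand collapse to \dots matching the radicand of items (1)--(5).'' If you actually perform this bookkeeping, the radicand $\varepsilon_0(1-\varepsilon_1 c\,\kappa_\gamma^2)$ depends only on the causal type of $\gamma$ (i.e.\ on $\varepsilon_0,\varepsilon_1$) and on the sign of $c$, and not at all on whether $|\kappa_\gamma|\gtrless|\tau_\gamma|$ --- which is the only thing distinguishing item (4) from item (5), and item (1) from item (2). Concretely: for a time-like $\gamma$ ($\varepsilon_0=-1$, $\varepsilon_1=1$) one necessarily has $c=1/\kappa_\gamma^2+\bigl[\tfrac{1}{\tau_\gamma}(\tfrac{1}{\kappa_\gamma})'\bigr]^2=r^2>0$, so the radicand is $r^2\kappa_\gamma^2-1$ in \emph{both} items (4) and (5), whereas item (5) asserts $1-r^2\kappa_\gamma^2$; and for a space-like $\gamma$ of type 2 ($\varepsilon_0=1$, $\varepsilon_1=-1$, item (3)) the radicand is $1+c\kappa_\gamma^2=1\pm r^2\kappa_\gamma^2$, which can never equal $r^2\kappa_\gamma^2-1$. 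So your closing claim that the substitution ``produces, in each of the five configurations, precisely the stated relation'' is unverified and, for the statement as printed, false: either the matching of radicands to items must be redone (with the choice of pseudo-sphere $c=+r^2$ versus $c=-r^2$ made explicit in each item, since it is not determined by $|\kappa_\gamma|\gtrless|\tau_\gamma|$), or the discrepancy with items (3) and (5) must be flagged. The proof is incomplete at exactly the point you yourself identified as ``the only real difficulty.''
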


\begin{proof}
The above formulas are proved exactly the same way as in the Euclidean case such as,
 for instance, the elementary proof given in \cite{DCA2018}.
\end{proof}

\begin{cor}
A non-null Frenet frame $\gamma$ with curvature $\kappa_{\gamma}$ and torsion $\tau_{\gamma}$ 
is a rectifying curve in $\mathbb E^3_1$ if and only if the curvature $\kappa_{\beta}$ and 
torsion $\tau_{\beta}$ of its natural mate $\beta$ satisfy
\begin{equation} \label{cnn}
\tau_{\beta}=c\left(\frac{\kappa_{\gamma}}{\kappa_{\beta}}\right)^2
\end{equation}
where $c$ is non-zero constant.
\end{cor}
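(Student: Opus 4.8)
The plan is to reduce the statement to the classical characterization of rectifying curves recorded in Theorem~\ref{Rcnn}, by exploiting the fact that, across all causal configurations in Theorems~\ref{TeoS} and~\ref{TeoT}, the torsion of the natural mate has one and the same expression in terms of the invariants of $\gamma$.

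First I would note that in each of the five items --- the three parts of Theorem~\ref{TeoS} and the two parts of Theorem~\ref{TeoT} --- the denominator in the formula for $\tau_{\beta}$ is exactly $\kappa_{\beta}^{2}$: according to the case, $\kappa_{\beta}^{2}$ equals $\kappa_{\gamma}^{2}-\tau_{\gamma}^{2}$, $\tau_{\gamma}^{2}-\kappa_{\gamma}^{2}$, or $\kappa_{\gamma}^{2}+\tau_{\gamma}^{2}$, and in each case the sign convention is the one making $\kappa_{\beta}^{2}>0$, which is precisely the quantity appearing in the denominator of $\tau_{\beta}$. Hence the uniform identity
\[
\tau_{\beta}=\frac{\kappa_{\gamma}^{2}}{\kappa_{\beta}^{2}}\left(\frac{\tau_{\gamma}}{\kappa_{\gamma}}\right)^{\prime}
\qquad\text{equivalently}\qquad
\tau_{\beta}\left(\frac{\kappa_{\beta}}{\kappa_{\gamma}}\right)^{2}=\left(\frac{\tau_{\gamma}}{\kappa_{\gamma}}\right)^{\prime}
\]
holds in every case, the rearrangement being legitimate since $\kappa_{\beta}>0$ and $\kappa_{\gamma}>0$. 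This identity is the engine of the argument.

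Next I would read off the chain of equivalences. By the identity above, the asserted relation $\tau_{\beta}=c\,(\kappa_{\gamma}/\kappa_{\beta})^{2}$ with $c$ a non-zero constant is equivalent to $(\tau_{\gamma}/\kappa_{\gamma})^{\prime}=c$ with $c\neq 0$, that is, to $\tau_{\gamma}/\kappa_{\gamma}=c\,s+c_{2}$ for some non-zero constant $c$ and some constant $c_{2}$. Before applying Theorem~\ref{Rcnn} I would check its hypothesis: since $\gamma$ is a non-null Frenet curve, $T_{\gamma}$ is non-null, $\langle B_{\gamma},B_{\gamma}\rangle=-\varepsilon_{0}\varepsilon_{1}\neq 0$, and $\langle T_{\gamma},B_{\gamma}\rangle=0$, so the rectifying plane $\mathrm{span}\{T_{\gamma},B_{\gamma}\}$ is non-degenerate, hence either space-like or time-like; thus Theorem~\ref{Rcnn} applies and yields that $\tau_{\gamma}/\kappa_{\gamma}=c\,s+c_{2}$ (with $c\neq 0$) holds if and only if $\gamma$ is a rectifying curve. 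Concatenating the two equivalences completes the proof.

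The only delicate point is the bookkeeping in the first step: one must verify, case by case, that ``the denominator of $\tau_{\beta}$ equals $\kappa_{\beta}^{2}$'' really does hold with the correct sign in each of the five configurations, and observe that the conclusion is insensitive to whether $\beta$ is space-like of type~1, of type~2, or time-like, because $\kappa_{\beta}$ and $\tau_{\beta}$ enter only through $\kappa_{\beta}^{2}$ and through the scalar $\tau_{\beta}$. I would also remark that both directions are compatible with $\beta$ being a genuine Frenet curve, since $c\neq 0$ forces $\tau_{\beta}\neq 0$, as required by the Frenet apparatus of $\beta$.
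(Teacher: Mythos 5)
Your proposal is correct and follows essentially the same route as the paper: both reduce \eqref{cnn} to the identity $\tau_{\beta}=\frac{\kappa_{\gamma}^{2}}{\kappa_{\beta}^{2}}\left(\frac{\tau_{\gamma}}{\kappa_{\gamma}}\right)^{\prime}$ (valid in every causal case of Theorems \ref{TeoS} and \ref{TeoT}) and then invoke the characterization $\tau_{\gamma}/\kappa_{\gamma}=c_{1}s+c_{2}$ of Theorem \ref{Rcnn}. The only cosmetic differences are that the paper re-derives the forward implication from the position-vector form $\gamma=(s+c_{1})T_{\gamma}+c_{2}B_{\gamma}$ instead of citing Theorem \ref{Rcnn} in both directions, while you add the (welcome) verification that the rectifying plane is non-degenerate so that Theorem \ref{Rcnn} actually applies.
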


\begin{proof}
Let $\gamma$ be a non-null rectifying curve with curvature $\kappa_{\gamma}$ and torsion 
$\tau_{\gamma}$. Then from \eqref{nnrccondition}, we can write the curve $\gamma$ as
\begin{equation}\nonumber
\gamma(s)=(s+c_1)T_{\gamma}+c_2 B_{\gamma},
\end{equation} 
for some constants $c_1$ and $c_2 \neq 0$. By differentiating the last above equation, we get 
$c_2 \tau_{\gamma}=(s+c_1)\kappa_{\gamma}$ which yields
\begin{equation}\nonumber
\left(\frac{\tau_{\gamma}}{\kappa_{\gamma}}\right)^{\prime}=c
\end{equation}
where $c=\frac{1}{c_2}$. Now, we get directly \eqref{cnn} by using expressions of $\kappa_{\beta}$
 and $\tau_{\beta}$. 
Conversely, let the condition \eqref{cnn} be hold. By substituting the expressions of $\kappa_{\beta}$
 and $\tau_{\beta}$ in \eqref{cnn}, we get
\begin{equation}\nonumber
\left(\frac{\tau_{\gamma}}{\kappa_{\gamma}}\right)^{\prime}=c
\end{equation}
for $c=\frac{1}{c_2}$. So we conclude ${\tau_{\gamma}}/{\kappa_{\gamma}}$ is a linear function 
of $s$, which say that $\gamma$ is a rectifying curve according to \eqref{nnrccondition}.  
\end{proof}

\section{Some relationship with Natural mates}

\begin{thm} \label{Sthmsc}
Let $\gamma$ be a space-like Frenet curve with constant curvature $\kappa_{\gamma}=c>0$. Then its natural 
mate $\beta$ is a non-null spherical curve with radius $r$. The opposite is provided if $\tau_{\beta}$, 
the torsion of $\beta$ is a non-zero.
\end{thm}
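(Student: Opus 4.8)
The plan is to establish the two implications separately, in both cases working with the fixed center of the pseudosphere on which $\beta$ must lie. For the first implication I would not verify the spherical differential equations of Theorems~\ref{SScteo}--\ref{TScteo} case by case, but rather produce that center directly; for the converse I would recover the center from the spherical hypothesis and then differentiate twice.

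For the forward direction, recall that $\gamma$ space-like means $\varepsilon_0=1$, and by hypothesis $\kappa_\gamma\equiv c>0$. The Frenet equations \eqref{Ffnn} give $T_\gamma'=\varepsilon_1\kappa_\gamma N_\gamma=\varepsilon_1 c\,N_\gamma$, while the definition of the natural mate gives $\beta'=T_\beta=N_\gamma$. Hence
\[
p_0:=\beta-\frac{\varepsilon_1}{c}\,T_\gamma
\]
satisfies $p_0'=N_\gamma-\frac{\varepsilon_1}{c}\,T_\gamma'=N_\gamma-\varepsilon_1^{2}N_\gamma=0$, so $p_0$ is a fixed point of $\mathbb E^3_1$. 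Then $\beta-p_0=\frac{\varepsilon_1}{c}T_\gamma$, and since $\varepsilon_1^{2}=1$,
\[
\left\langle\beta-p_0,\beta-p_0\right\rangle=\frac{1}{c^{2}}\left\langle T_\gamma,T_\gamma\right\rangle=\frac{\varepsilon_0}{c^{2}}=\frac{1}{c^{2}},
\]
so $\beta$ lies on the pseudosphere of radius $r=1/c=1/\kappa_\gamma$ centered at $p_0$. This one computation handles all three sub-cases of Theorem~\ref{TeoS} ($\gamma$ of \textit{type1} with $|\kappa_\gamma|>|\tau_\gamma|$ or with $|\kappa_\gamma|<|\tau_\gamma|$, and $\gamma$ of \textit{type2}), and $\beta$ is non-null by that theorem; so the first statement follows. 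Alternatively one may insert $\kappa_\beta,\tau_\beta$ from Theorem~\ref{TeoS} with $\kappa_\gamma\equiv c$ into \eqref{Sctype1}, \eqref{Sctype2} or \eqref{TSc} and simplify, but the argument above is shorter and uniform.

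For the converse, suppose $\beta$ is a non-null spherical curve with $\tau_\beta\neq0$, so that there is a fixed $q_0$ with $\left\langle\beta-q_0,\beta-q_0\right\rangle$ constant. Differentiating and using $\beta'=N_\gamma$ gives $\left\langle N_\gamma,\beta-q_0\right\rangle=0$, so $\beta-q_0=aT_\gamma+bB_\gamma$ for some functions $a,b$ on $I$. Differentiating once more, substituting $T_\gamma'=\varepsilon_1\kappa_\gamma N_\gamma$ and $B_\gamma'=-\varepsilon_1\tau_\gamma N_\gamma$ from \eqref{Ffnn}, and comparing the $T_\gamma$-, $N_\gamma$- and $B_\gamma$-components with $\beta'=N_\gamma$, yields $a'=0$, $b'=0$ and $\varepsilon_1(a\kappa_\gamma-b\tau_\gamma)=1$; thus $a,b$ are constants with $a\kappa_\gamma-b\tau_\gamma=\varepsilon_1$. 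If $b=0$ this reads $\kappa_\gamma=\varepsilon_1/a$, i.e.\ $\kappa_\gamma$ is constant and $r=1/|\kappa_\gamma|$, which is the assertion. The delicate point --- the one I expect to be the main obstacle --- is to dispose of the case $b\neq0$: there the identity only forces $\tau_\gamma$ to be affine in $\kappa_\gamma$, and one must bring in the hypothesis $\tau_\beta\neq0$, which by Theorem~\ref{TeoS} means precisely $(\tau_\gamma/\kappa_\gamma)'\neq0$, to conclude that $\kappa_\gamma$ is constant. Equivalently, one substitutes $\kappa_\beta$ and $\tau_\beta$ from Theorem~\ref{TeoS} into the appropriate spherical identity \eqref{Sctype1}, \eqref{Sctype2} or \eqref{TSc}, clears denominators using $\tau_\beta\neq0$, and checks whether the resulting second-order relation between $\kappa_\gamma$ and $\tau_\gamma$ indeed reduces to $\kappa_\gamma'=0$; carrying that reduction out carefully, and reconciling it with the geometric picture above, is the crux of the proof.
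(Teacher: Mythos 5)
Your forward direction is correct, and it takes a genuinely different route from the paper. The paper substitutes $\kappa_{\gamma}=c$ into the formulas \eqref{Faofb1a} for $\kappa_{\beta},\tau_{\beta}$ and verifies the intrinsic spherical conditions \eqref{Sctype1} together with $\left(1/\kappa_{\beta}\right)^{2}-\left[(1/\tau_{\beta})(1/\kappa_{\beta})^{\prime}\right]^{2}=1/c^{2}$, handling the sub-case of constant $\tau_{\gamma}$ (where $\tau_{\beta}=0$ and $\beta$ is a circle) separately. Your observation that $\bigl(\beta-\tfrac{\varepsilon_1}{c}T_{\gamma}\bigr)^{\prime}=0$ produces the centre directly, treats all causal sub-cases and the $\tau_{\beta}=0$ case uniformly, and yields the radius $r=1/c$ at once; it is shorter and more transparent than the intrinsic verification.

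The converse, however, is not yet a proof, and the step you defer cannot be completed in the way you hope. Your reduction to $a\kappa_{\gamma}-b\tau_{\gamma}=\varepsilon_1$ with $a,b$ constant is correct, but in the case $b\neq 0$ one has $\tau_{\gamma}/\kappa_{\gamma}=a/b-\varepsilon_1/(b\kappa_{\gamma})$, hence $(\tau_{\gamma}/\kappa_{\gamma})^{\prime}=\varepsilon_1\kappa_{\gamma}^{\prime}/(b\kappa_{\gamma}^{2})$, so the hypothesis $\tau_{\beta}\neq 0$ is \emph{equivalent} to $\kappa_{\gamma}^{\prime}\neq 0$ there --- it works against you, not for you. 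Concretely, with $\varepsilon_0=\varepsilon_1=1$, $a=2$, $b=1$, take $\kappa_{\gamma}=\tfrac34+\tfrac18\sin s$ and $\tau_{\gamma}=2\kappa_{\gamma}-1$ on an interval where $\cos s\neq 0$: then $\kappa_{\gamma}>\tau_{\gamma}>0$, the natural mate lies on the pseudosphere of radius $\sqrt{a^{2}-b^{2}}=\sqrt{3}$ centred at $\beta-2T_{\gamma}-B_{\gamma}$, and $\tau_{\beta}=\kappa_{\gamma}^{\prime}/(\kappa_{\gamma}^{2}-\tau_{\gamma}^{2})\neq 0$, yet $\kappa_{\gamma}$ is not constant; one can also check \eqref{Sctype1} directly for the resulting $\kappa_{\beta},\tau_{\beta}$. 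So your decomposition in fact exposes that the converse as stated fails. (The paper's own argument slides past this at the phrase ``with a suitable choice of constant,'' where two a priori independent integration constants, and implicitly the radius of the sphere, are identified.) The honest conclusion available from your method is: $\beta$ is spherical if and only if $a\kappa_{\gamma}-b\tau_{\gamma}=\varepsilon_1$ for some constants $a,b$, and $\kappa_{\gamma}$ is constant precisely in the sub-case $b=0$, i.e.\ when the centre satisfies $\langle \beta-q_0,B_{\gamma}\rangle=0$. You should either prove that weaker statement or add the hypothesis singling out $b=0$; as written, the gap is not a technical one but reflects a defect in the statement itself.
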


\begin{proof}
Let $\gamma$ be a space-like Frenet curve with $\kappa_{\gamma}=c>0$. Since $\gamma$ can be of \textit{type1} 
or \textit{type2}, we consider on these situations. However, as the proof of these situations will be very 
similar to each other just for the first case will be enough to prove. Thus, let $\gamma$ be a space-like
 Frenet curve of \textit{type1}. Note that since $\beta$ is a space-like curve of \textit{type1} 
or \textit{type2} for the first assumption from Theorem \ref{TeoS}, we assume 
the natural mate $\beta$ is a space-like curve of \textit{type1}. Thus, the curvature $\kappa_{\beta}$ 
and the torsion $\tau_{\beta}$ of the space-like natural mate $\beta$ of $\gamma$ given in \eqref{Faofb1a}
 can be rewritten as
\begin{equation} \label{ctofssb}
\kappa_{\beta}=\sqrt{{c}^2-{\tau_{\gamma}}^2},\quad \tau_{\beta}=\frac{c\left({\tau}{\kappa}\right)^{\prime}}{{c}^2-{\tau_{\gamma}}^2}.
\end{equation}

Now, if the space-like curve $\gamma$ of \textit{type1} has also a constant torsion, i.e., 
$\tau_{\gamma}=c_1$, then we directly conclude that  $\kappa_{\beta}=c_2$ and $\tau_{\beta}=0$ of the curve $\beta$, from the above expression. Which says that $\beta$ is a part of circle with radius $c^{-1}$, so we get the natural mate $\beta$ of \textit{type1} is a spherical curve lying on a sphere of radius $c^{-1}$.

On the other hand, let the space-like curve $\gamma$ of \textit{type1} be of a non-constant torsion. By taking into account the expressions 
\eqref{ctofssb}, we can show that the following equation is provided by direct calculation 
\begin{equation} \label{SScssbc}
\frac{\tau_{\beta}}{\kappa_{\beta}}=\left[\frac{1}{\tau_{\beta}}\left(\frac{1}{\kappa_{\beta}}\right)^{\prime}\right]^{\prime},
\end{equation} 
saying that the space-like natural mate $\beta$ of \textit{type1} is a spherical curve from \eqref{Sctype1}. Also, we get 
\begin{equation} \label{SScssbcr}
\left(\frac{1}{\kappa_{\beta}}\right)^{2}-\left[\frac{1}{\tau_{\beta}}\left(\frac{1}{\kappa_{\beta}}\right)^{\prime}\right]^{2}=\frac{1}{c^2},
\end{equation} 
which implies that $\beta$ lies on the sphere of radius $c^{-1}$, (see in \cite{PP1999}).

Oppositely, assume that the natural mate $\beta$ of $\gamma$ is a spherical time-like curve with 
non-zero torsion, $\left(\tau_{\beta} \neq 0\right)$, lying on the sphere of radius $c^{-1}$. So, 
the equation \eqref{SScssbcr} holds. From there, we get directly
\begin{equation} \label{Kb}
\frac{{\kappa_{\beta}}^{\prime}}{\left|{\kappa_{\beta}}\right|\sqrt{c^2-{\kappa_{\beta}}^{2}}}= \frac{\tau_{\beta}}{c}.
\end{equation} 
By integrating the last equation, we get
\begin{equation}
\kappa_{\beta}= \frac{c}{\cosh \left(\int{\tau_{\beta}(s)ds}\right)}.
\end{equation}
On the other hand, from \eqref{ctofssb} we have 
\begin{equation} \nonumber
\tau_{\beta}=\frac{\left(\frac{\tau_{\beta}}{\kappa_{\beta}}\right)^{\prime}}{1-\left(\frac{\tau_{\beta}}{\kappa_{\beta}}\right)^{2}}.
\end{equation}
If we substitute $\tau_{\beta}=\kappa_{\beta} \tanh(\theta+c_1)$ in the above expression, 
we obtain $\tau_{\beta}=\theta^{\prime}$. Therefore, by considering the last results in \eqref{Kb} we get 
\begin{equation} \label{Kb0}
\kappa_{\beta}=\frac{c}{\cosh \left(\theta+c_1 \right)}
\end{equation}
with a suitable choice of constant.

On the other hand, by considering $\tau_{\beta}=\kappa_{\beta} \tanh(\theta+c_1)$ and 
$\kappa_{\beta}=\sqrt{{\kappa_{\beta}}^{2}-{\tau_{\beta}}^{2}}$ yields
\begin{equation} \label{Kb1}
\kappa_{\beta}=\frac{\kappa_{\gamma}}{ \cosh \left(\theta+c_1 \right)}
\end{equation}
Finally, by comparing the expressions $\kappa_{\beta}$ in the equations \eqref{Kb0} and \eqref{Kb1} concludes 
the curvature $\kappa_{\gamma}=c$. So, the proof of the theorem is completed.
\end{proof}

\begin{thm}
Let $\gamma$ be a time-like Frenet curve with constant curvature $\kappa_{\gamma}=c>0$. Then its natural 
mate $\beta$ is a non-null spherical curve with radius $r$. The opposite is provided if $\tau_{\beta}$, 
the torsion of $\beta$ is a non-zero.
\end{thm}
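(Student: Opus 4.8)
The plan is to mirror the proof of Theorem \ref{Sthmsc} almost verbatim, adjusting only the causal-character bookkeeping that changes when $\gamma$ is time-like. By Theorem \ref{TeoT}, a time-like Frenet curve $\gamma$ with $\kappa_\gamma=c>0$ has a space-like natural mate $\beta$ which is of \textit{type1} when $|\tau_\gamma|>c$ and of \textit{type2} when $|\tau_\gamma|<c$; as in the earlier proof, the two cases are handled identically up to signs, so I would treat one of them, say the \textit{type2} case governed by \eqref{Faofb5a}, and remark that the other is analogous. Substituting $\kappa_\gamma=c$ into \eqref{Faofb5a} gives
\begin{equation}\nonumber
\kappa_{\beta}=\sqrt{c^2-{\tau_{\gamma}}^2},\qquad
\tau_{\beta}=\frac{c\left(\frac{\tau_{\gamma}}{c}\right)^{\prime}}{c^2-{\tau_{\gamma}}^2}
=\frac{\tau_{\gamma}^{\prime}}{c^2-{\tau_{\gamma}}^2}.
\end{equation}

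First I would dispose of the degenerate sub-case $\tau_\gamma=\mathrm{const}$, where $\kappa_\beta$ is a non-zero constant and $\tau_\beta=0$, so $\beta$ is a circular arc of radius $\kappa_\beta^{-1}$, hence trivially spherical (on a sphere of radius $c^{-1}$, since $\kappa_\beta\le c$). For the main sub-case I would assume $\tau_\gamma$ non-constant and verify by direct differentiation that $\beta$ satisfies the spherical condition from Theorem \ref{SScteo}, i.e.
\begin{equation}\nonumber
\frac{\tau_{\beta}}{\kappa_{\beta}}
=\left[\frac{1}{\tau_{\beta}}\left(\frac{1}{\kappa_{\beta}}\right)^{\prime}\right]^{\prime},
\end{equation}
together with the radius identity
\begin{equation}\nonumber
\left(\frac{1}{\kappa_{\beta}}\right)^{2}
-\left[\frac{1}{\tau_{\beta}}\left(\frac{1}{\kappa_{\beta}}\right)^{\prime}\right]^{2}=\frac{1}{c^2},
\end{equation}
which pins the radius to $r=c^{-1}$. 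This is the same computation as in \eqref{SScssbc}--\eqref{SScssbcr}: from $\kappa_\beta=\sqrt{c^2-\tau_\gamma^2}$ one gets $(1/\kappa_\beta)'=\tau_\gamma\tau_\gamma'/(c^2-\tau_\gamma^2)^{3/2}$, and dividing by $\tau_\beta=\tau_\gamma'/(c^2-\tau_\gamma^2)$ yields $\frac{1}{\tau_\beta}(1/\kappa_\beta)'=\tau_\gamma/(c^2-\tau_\gamma^2)^{1/2}=\tau_\gamma/\kappa_\beta$; differentiating once more and simplifying reproduces $\tau_\beta/\kappa_\beta$, and the squared combination collapses to $1/c^2$. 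I would note which of \eqref{Sctype1}--\eqref{TSc} of Theorem \ref{SScteo} or \ref{TScteo} actually applies depends on the sign of $\left(1/\kappa_\beta\right)^2-\left[\frac{1}{\tau_\beta}(1/\kappa_\beta)'\right]^2=1/c^2>0$, which selects the space-like-normal case, consistent with $\beta$ being of \textit{type1} or \textit{type2}.

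For the converse, I would assume $\beta$ is a non-null spherical curve of radius $r$ with $\tau_\beta\ne0$, so the radius identity above holds with $1/c^2$ replaced by $1/r^2$; rearranging gives, exactly as in \eqref{Kb},
\begin{equation}\nonumber
\frac{\kappa_{\beta}^{\prime}}{\left|\kappa_{\beta}\right|\sqrt{r^{-2}-\kappa_{\beta}^{2}}}
=\pm\,r\,\tau_{\beta},
\end{equation}
and integrating yields $\kappa_\beta=r^{-1}/\cosh\!\big(\int\tau_\beta\,ds\big)$. Independently, from $\tau_\beta=\frac{(\tau_\gamma/\kappa_\gamma)'}{1-(\tau_\gamma/\kappa_\gamma)^2}$ (the $\beta$-intrinsic form of \eqref{Faofb5a}) I set $\tau_\gamma/\kappa_\gamma=\tanh(\theta+c_1)$, which forces $\tau_\beta=\theta'$, and then $\kappa_\beta=\sqrt{\kappa_\gamma^2-\tau_\gamma^2}=\kappa_\gamma/\cosh(\theta+c_1)$. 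Comparing the two expressions for $\kappa_\beta$ gives $\kappa_\gamma=r^{-1}=\mathrm{const}$, as desired. The one point needing care — the main obstacle — is making sure every square-root and absolute-value sign, and every choice of $\varepsilon_0,\varepsilon_1$, is consistent when $\gamma$ is time-like: here $\varepsilon_0=-1$, and $\beta$'s type is dictated by $\mathrm{sign}(\tau_\gamma^2-\kappa_\gamma^2)$, so one must check that the spherical criterion invoked (Theorem \ref{SScteo}(i) versus (ii), or Theorem \ref{TScteo}) matches $\beta$'s actual causal character in each sub-case; the algebra itself is identical to the space-like proof and to the Euclidean computation in \cite{DCA2018}.
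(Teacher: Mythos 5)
Your proposal follows exactly the route the paper intends --- the paper's own ``proof'' of this theorem is just the remark that the argument of Theorem \ref{Sthmsc} carries over, and you have written out that adaptation with the correct case bookkeeping from Theorem \ref{TeoT}. One correction, though: with $\kappa_\gamma=c$, equation \eqref{Faofb5a} gives $\tau_\beta=\frac{c^2}{c^2-\tau_\gamma^2}\cdot\frac{\tau_\gamma'}{c}=\frac{c\,\tau_\gamma'}{c^2-\tau_\gamma^2}$, not $\frac{\tau_\gamma'}{c^2-\tau_\gamma^2}$; restoring that factor of $c$ yields $\frac{1}{\tau_\beta}\left(\frac{1}{\kappa_\beta}\right)'=\frac{\tau_\gamma}{c\,\kappa_\beta}$, so the radius combination genuinely collapses to $1/c^2$, whereas your version as written gives $(1-\tau_\gamma^2)/(c^2-\tau_\gamma^2)$, which is not constant.
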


\begin{proof}
Proof of this theorem can be made exactly similar to the proof of the previous theorem.
\end{proof}

\begin{thm} \label{Sratio}
Let $\gamma$ be a space-like Frenet curve and its natural mate $\beta$ be a non-null one in $\mathbb E^3_1$. If the natural 
mate $\beta$ has constant curvature $\kappa_{\beta}=c>0$, then the ratio of the curvatures of $\gamma$ is given by 
one of the followings:
\begin{itemize}
\item[(i)] If $\left|\kappa_{\gamma}\right|>\left|\tau_{\gamma}\right|$, 
\begin{equation} \label{casea}
\frac{\tau_{\gamma}}{\kappa_{\gamma}}= \tanh \left(\int{\tau_{\beta}(s)ds}\right)
\end{equation}
In case, $\beta$ is a space-like curve of \textit{type1}.
\item[(ii)] If $\left|\kappa_{\gamma}\right|<\left|\tau_{\gamma}\right|$,
\begin{equation}
\frac{\tau_{\gamma}}{\kappa_{\gamma}}= \coth \left(\int{\tau_{\beta}(s)ds}\right)
\end{equation}
In case, $\beta$ is a space-like curve of \textit{type2}.
\item[(iii)] 
\begin{equation}
\frac{\tau_{\gamma}}{\kappa_{\gamma}}= \tan \left(\int{\tau_{\beta}(s)ds}\right)
\end{equation}
In case, $\beta$ is a time-like curve in $\mathbb E^3_1$.
\end{itemize}
\end{thm}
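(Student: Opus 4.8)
The plan is to treat the three causal configurations of $\beta$ separately, in each case reading off $\kappa_{\beta}$ and $\tau_{\beta}$ from Theorem~\ref{TeoS}, imposing the hypothesis $\kappa_{\beta}\equiv c$, and recognising the resulting formula for $\tau_{\beta}$ as an exact derivative of an inverse (hyperbolic) tangent of the ratio $u:=\tau_{\gamma}/\kappa_{\gamma}$. Since $\beta$ is assumed non-null, Theorem~\ref{TeoS} applies and exactly one of the following occurs: $|\kappa_{\gamma}|>|\tau_{\gamma}|$ with $\beta$ space-like of \textit{type1} and $(\kappa_{\beta},\tau_{\beta})$ as in \eqref{Faofb1a}; $|\kappa_{\gamma}|<|\tau_{\gamma}|$ with $\beta$ space-like of \textit{type2} and $(\kappa_{\beta},\tau_{\beta})$ as in \eqref{Faofb2a}; or $\beta$ time-like with $(\kappa_{\beta},\tau_{\beta})$ as in \eqref{Faofb3a}. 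The degenerate possibility $\kappa_{\gamma}^{2}=\tau_{\gamma}^{2}$ is ruled out precisely because $\beta$ is non-null.

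First I would do case (i). The assumption $\kappa_{\beta}=c$ together with \eqref{Faofb1a} gives $\kappa_{\gamma}^{2}-\tau_{\gamma}^{2}=c^{2}$, so $|u|<1$ throughout and $\kappa_{\gamma}^{2}=c^{2}/(1-u^{2})$. Plugging this into $\tau_{\beta}=\frac{\kappa_{\gamma}^{2}}{\kappa_{\gamma}^{2}-\tau_{\gamma}^{2}}\,u'$ from \eqref{Faofb1a} collapses it to
\[
\tau_{\beta}=\frac{u'}{1-u^{2}}=\bigl(\operatorname{arctanh}u\bigr)',
\]
which is a genuine derivative since $1-u^{2}$ is nowhere zero. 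Integrating and absorbing the additive constant into the choice of the lower limit of integration yields $\operatorname{arctanh}u=\int\tau_{\beta}(s)\,ds$, that is, $\tau_{\gamma}/\kappa_{\gamma}=\tanh\bigl(\int\tau_{\beta}(s)\,ds\bigr)$, which is \eqref{casea}. (If $u$ happens to be a nonzero constant, i.e.\ $\gamma$ is a $W$-curve, then $\tau_{\beta}\equiv0$ and the identity still holds for a suitable value of the constant.)

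Cases (ii) and (iii) run in exactly the same way. In (ii), $\kappa_{\beta}=c$ and \eqref{Faofb2a} force $\tau_{\gamma}^{2}-\kappa_{\gamma}^{2}=c^{2}$, hence $|u|>1$ and $\kappa_{\gamma}^{2}=c^{2}/(u^{2}-1)$, so that $\tau_{\beta}=\frac{u'}{u^{2}-1}$; its antiderivative is $\operatorname{arccoth}u$ (up to the usual sign), and inverting, with the constant of integration chosen suitably, gives $\tau_{\gamma}/\kappa_{\gamma}=\coth\bigl(\int\tau_{\beta}(s)\,ds\bigr)$. In (iii), $\kappa_{\beta}=c$ and \eqref{Faofb3a} force $\kappa_{\gamma}^{2}+\tau_{\gamma}^{2}=c^{2}$, hence $\kappa_{\gamma}^{2}=c^{2}/(1+u^{2})$ and $\tau_{\beta}=\frac{u'}{1+u^{2}}=(\arctan u)'$; integrating then gives $\tau_{\gamma}/\kappa_{\gamma}=\tan\bigl(\int\tau_{\beta}(s)\,ds\bigr)$.

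I do not expect a real obstacle here: the substance of the argument is just the elementary observation that, once $\kappa_{\beta}$ is constant, the relation between $\kappa_{\gamma}$ and $\tau_{\gamma}$ becomes algebraic and $\tau_{\beta}$ turns into the derivative of $\operatorname{arctanh}u$, $\operatorname{arccoth}u$, or $\arctan u$ according to the case. The only points requiring attention are (a) confirming from the case hypotheses that the denominators $1-u^{2}$, $u^{2}-1$, $1+u^{2}$ never vanish, so that the antiderivatives are globally well defined and the inversion is legitimate, and (b) the harmless bookkeeping of the constant of integration (and the standard $\mp$ sign ambiguities in the definition of torsion), handled exactly as in the proof of Theorem~\ref{Sthmsc} by ``a suitable choice of constant.''
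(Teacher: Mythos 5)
Your proposal is correct and follows essentially the same route as the paper: impose $\kappa_{\beta}=c$ in the formulas of Theorem~\ref{TeoS} and integrate the resulting expression for $\tau_{\beta}$. The only cosmetic difference is that the paper first reduces $\tau_{\beta}$ to $\tau_{\gamma}'/\sqrt{c^{2}+\tau_{\gamma}^{2}}$ and integrates to $\tau_{\gamma}=c\sinh\left(\int\tau_{\beta}\,ds\right)$, whereas you integrate $u'/(1-u^{2})$ directly to $\operatorname{arctanh}(\tau_{\gamma}/\kappa_{\gamma})$ — the same elementary computation.
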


\begin{proof}
The proof of this theorem will be proved only for the natural mates $\left\{\gamma,\beta \right\}$ being 
a space-like pair of \textit{type1}. The other cases can be proved by the same way. 
Now, let the curvature of the space-like natural mate $\beta$ of $\gamma$, $\kappa_{\beta}$ be a constant 
$c>0$. By considering this assumption in the first expression in \eqref{Faofb1a}, we get
\begin{equation} \label{Kbc}
{c}^2={\kappa_{\gamma}}^2-{\tau_{\gamma}}^2,
\end{equation}
which yields $\kappa_{\gamma} {\kappa_{\gamma}}^{\prime}-\tau_{\gamma} {\tau_{\gamma}}^{\prime}=0$, 
where $\left|\kappa_{\gamma}\right|>\left|\tau_{\gamma}\right|$. Substituting \eqref{Kbc} in the second expression in 
\eqref{Faofb1a} and then considering the last result in it, we obtain 
\begin{equation} \label{Tbc}
\tau_{\beta}=\frac{{\tau_{\gamma}}^{\prime}{\kappa_{\gamma}}^2-{\kappa_{\gamma}}^{\prime}\tau_{\gamma}\kappa_{\gamma}}{\kappa_{\gamma}c^2}%
=\frac{{\tau_{\gamma}}^{\prime}}{\kappa_{\gamma}}=\frac{{\tau_{\gamma}}^{\prime}}{\sqrt{c^2+{\tau_{\gamma}}^2}}.
\end{equation}
Therefore, we get $\tau_{\gamma}=c \sinh \left(\int{\tau_{\beta}(s)ds}\right)$. By considering this in \eqref{Kbc} yields 
$\kappa_{\gamma}=c \cosh \left(\int{\tau_{\beta}(s)ds}\right)$. Consequently, the ratio of curvatures $\gamma$ satisfies \eqref{casea}.
\end{proof}

\begin{thm}
Let $\gamma$ be a time-like Frenet curve and its natural mate $\beta$ be a non-null one in $\mathbb E^3_1$. If the natural 
mate $\beta$ has constant curvature $\kappa_{\beta}=c>0$, then the ratio of the curvatures of $\gamma$ is given by 
one of the followings:
\begin{itemize}
\item[(i)] If $\left|\kappa_{\gamma}\right|<\left|\tau_{\gamma}\right|$,
\begin{equation}
\frac{\tau_{\gamma}}{\kappa_{\gamma}}= \coth \left(\int{\tau_{\beta}(s)ds}\right)
\end{equation}
In case, $\beta$ is a space-like curve of \textit{type1}.
\item[(ii)] If $\left|\kappa_{\gamma}\right|>\left|\tau_{\gamma}\right|$, 
\begin{equation}
\frac{\tau_{\gamma}}{\kappa_{\gamma}}= \tanh \left(\int{\tau_{\beta}(s)ds}\right)
\end{equation}
In case, $\beta$ is a space-like curve of \textit{type2}.
\end{itemize}
\end{thm}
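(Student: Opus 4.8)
The plan is to follow the proof of Theorem~\ref{Sratio} almost verbatim, since the Frenet--Serret data of the natural mate of a time-like curve recorded in Theorem~\ref{TeoT} has the same algebraic shape as in the space-like situation, only with the two subcases interchanged. Indeed, the invariants in \eqref{Faofb5a} that govern case (ii) --- where $\beta$ is a space-like curve of \textit{type2} --- are literally the same pair $\kappa_{\beta}=\sqrt{{\kappa_{\gamma}}^2-{\tau_{\gamma}}^2}$, $\tau_{\beta}=\frac{{\kappa_{\gamma}}^2}{{\kappa_{\gamma}}^2-{\tau_{\gamma}}^2}\left(\frac{\tau_{\gamma}}{\kappa_{\gamma}}\right)'$ appearing in \eqref{Faofb1a}. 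So for case (ii) I would simply invoke the computation already carried out in the proof of Theorem~\ref{Sratio}, which yields $\tau_{\gamma}/\kappa_{\gamma}=\tanh\left(\int\tau_{\beta}(s)\,ds\right)$ with no change at all; hence the only case requiring a fresh argument is (i).

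For (i) I would start from \eqref{Faofb4a}, valid where $|\kappa_{\gamma}|<|\tau_{\gamma}|$, and impose the hypothesis $\kappa_{\beta}=c>0$. This gives $c^2={\tau_{\gamma}}^2-{\kappa_{\gamma}}^2$; differentiating yields $\tau_{\gamma}{\tau_{\gamma}}'=\kappa_{\gamma}{\kappa_{\gamma}}'$, that is, ${\kappa_{\gamma}}'=\tau_{\gamma}{\tau_{\gamma}}'/\kappa_{\gamma}$. Next I would replace the denominator of $\tau_{\beta}$ by $c^2$, expand the quotient derivative to obtain $\tau_{\beta}=c^{-2}\left({\tau_{\gamma}}'\kappa_{\gamma}-\tau_{\gamma}{\kappa_{\gamma}}'\right)$, and then substitute ${\kappa_{\gamma}}'=\tau_{\gamma}{\tau_{\gamma}}'/\kappa_{\gamma}$ together with ${\kappa_{\gamma}}^2-{\tau_{\gamma}}^2=-c^2$; the bracket collapses and one is left with
\begin{equation}\nonumber
\tau_{\beta}=-\frac{{\tau_{\gamma}}'}{\kappa_{\gamma}}=-\frac{{\tau_{\gamma}}'}{\sqrt{{\tau_{\gamma}}^2-c^2}}.
\end{equation}
Integrating, $\int\tau_{\beta}(s)\,ds=-\operatorname{arccosh}(\tau_{\gamma}/c)$ up to an additive constant, so --- absorbing the constant and using that $\cosh$ is even --- $\tau_{\gamma}=c\cosh\left(\int\tau_{\beta}(s)\,ds\right)$, whence $\kappa_{\gamma}=\sqrt{{\tau_{\gamma}}^2-c^2}=c\left|\sinh\left(\int\tau_{\beta}(s)\,ds\right)\right|$, and dividing the two gives $\tau_{\gamma}/\kappa_{\gamma}=\coth\left(\int\tau_{\beta}(s)\,ds\right)$, which is the assertion in (i).

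None of these steps is deep, and I expect the write-up to be short. The only thing to watch is the sign bookkeeping --- the minus sign in $\tau_{\beta}=-{\tau_{\gamma}}'/\kappa_{\gamma}$ is precisely what turns the $\tanh$ of the space-like \textit{type1} case into the $\coth$ of the present \textit{type1} case --- together with the harmless choice of branch for $\operatorname{arccosh}$, which causes no difficulty because ${\tau_{\gamma}}^2-c^2={\kappa_{\gamma}}^2>0$ throughout. So the ``hard part'' amounts to nothing more than keeping these signs straight, exactly as in the proof of Theorem~\ref{Sratio}.
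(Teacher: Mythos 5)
Your proposal is correct and follows exactly the route the paper intends: the paper's own ``proof'' is just the one-line remark that the argument is the same as for Theorem~\ref{Sratio}, and you carry out precisely that adaptation, correctly noting that case (ii) reduces verbatim to the computation from \eqref{Faofb1a} while case (i) repeats it starting from \eqref{Faofb4a} with the sign flip $\tau_\beta=-\tau_\gamma'/\kappa_\gamma$ producing $\coth$ in place of $\tanh$. The residual sign/branch ambiguities you flag are handled no more carefully in the paper's model proof, so your write-up matches it in both method and level of rigor.
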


\begin{proof}
Proof of this theorem can be made exactly similar to the proof of the previous theorem.
\end{proof}

\begin{thm}
Let $\gamma$ be a space-like Frenet curve and its natural mate $\beta$ be of constant curvature $c>0$. 
Then $\gamma$ is a spherical curve if and only if there is a positive constant $a \geq c$ such that the torsion
 $\tau_{\beta}$ of $\beta$ with respect to a suitable arc-length parametrization $s$ satisfies one of the following:
\begin{itemize}
\item[(i)] If $\left|\kappa_{\gamma}\right|>\left|\tau_{\gamma}\right|$,
\begin{equation} \label{casei}
\tau_{\beta}= \pm \frac{c^2 \sqrt{a^2+c^2}\cos (cs)}{c^2-(a^2+c^2){\sin{^2}(cs)}}.
\end{equation}
In case, $\beta$ is a space-like curve of \textit{type1}.
\item[(ii)] If $\left|\kappa_{\gamma}\right|<\left|\tau_{\gamma}\right|$,
\begin{equation}
\tau_{\beta}= \pm \frac{c^2 \sqrt{a^2+c^2}\sinh (cs)}{c^2-(a^2+c^2){\cosh{^2}(cs)}}.
\end{equation}
In case, $\beta$ is a space-like curve of \textit{type2}.
\item[(iii)] 
\begin{equation}
\tau_{\beta}= \pm \frac{c^2 \sqrt{a^2+c^2}\cosh (cs)}{c^2-(a^2+c^2){\sinh{^2}(cs)}}.
\end{equation}
In case, $\beta$ is a time-like curve in $\mathbb E^3_1$.
\end{itemize}
\end{thm}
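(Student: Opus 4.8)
The plan is to handle the three cases by a single computation, the only difference being which pair of functions parametrizes $(\kappa_\gamma,\tau_\gamma)$. Consider case (i). Constancy of $\kappa_\beta=c$ together with the first relation in \eqref{Faofb1a} forces $\kappa_\gamma^2-\tau_\gamma^2=c^2$, so I can write $\kappa_\gamma=c\cosh\phi$ and $\tau_\gamma=c\sinh\phi$ for a smooth function $\phi$; the second relation in \eqref{Faofb1a} (equivalently Theorem~\ref{Sratio}(i)) then gives $\tau_\beta=\phi'$, i.e. $\phi=\int\tau_\beta(s)\,ds$ and $\tau_\gamma/\kappa_\gamma=\tanh\phi$. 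Thus the whole problem is transported to a statement about the scalar function $\phi$ and its first two derivatives.

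Since $\gamma$ is space-like I would apply the spherical characterization of Theorem~\ref{SScteo}. Its equality clause $\frac{\tau_\gamma}{\kappa_\gamma}=\left[\frac{1}{\tau_\gamma}\left(\frac{1}{\kappa_\gamma}\right)'\right]'$, upon inserting $\kappa_\gamma=c\cosh\phi$, $\tau_\gamma=c\sinh\phi$ and $\phi'=\tau_\beta$, becomes a second-order autonomous ODE for $\phi$, namely $\phi''=2(\phi')^2\tanh\phi-c^2\sinh\phi\cosh\phi$. The key step is to lower the order: writing $u=(\phi')^2=\tau_\beta^2$ and regarding $u$ as a function of $\phi$ converts this into the linear equation $u'-4u\tanh\phi=-2c^2\sinh\phi\cosh\phi$, which the integrating factor $\cosh^{-4}\phi$ integrates to $u=\cosh^2\phi\,(c^2+K\cosh^2\phi)$ with $K$ a constant. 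Substituting $u$ back shows $\left(\frac{1}{\kappa_\gamma}\right)^2-\left[\frac{1}{\tau_\gamma}\left(\frac{1}{\kappa_\gamma}\right)'\right]^2=-K/c^4$, i.e. this combination is exactly the ``radius'' invariant; the inequality clause of Theorem~\ref{SScteo} together with $\tau_\gamma\neq0$ then forces $K=a^2>0$, and $r:=a/c^2$ is the radius of the containing (pseudo-)sphere. This gives $\tau_\beta^2=\cosh^2\phi\,(c^2+a^2\cosh^2\phi)$; moreover $a\geq c$, because a curve lying on a sphere of radius $r$ has curvature at least $1/r$ while here $\inf\kappa_\gamma=c$.

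It remains to carry out the quadrature $\phi'=\pm\cosh\phi\sqrt{c^2+a^2\cosh^2\phi}$. Putting $w=\sinh\phi$ turns the left-hand side into $\int\frac{dw}{(1+w^2)\sqrt{b^2+a^2w^2}}$ with $b^2:=a^2+c^2$; the substitution $w=\frac{b}{a}\sinh\xi$ brings this to $\int\frac{a\,d\xi}{a^2\cosh^2\xi+c^2\sinh^2\xi}$, which, after $v=\tanh\xi$, is the elementary integral $\int\frac{a\,dv}{a^2+c^2v^2}$. Hence, up to a harmless translation of $s$, $\tanh\xi=\frac{a}{c}\tan(cs)$, and unwinding the two substitutions gives $\tanh\phi=\frac{b}{c}\sin(cs)$, equivalently $\cosh^2\phi=c^2/\!\left(c^2-b^2\sin^2(cs)\right)$. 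Differentiating $\phi=\operatorname{arctanh}\!\left(\frac{b}{c}\sin(cs)\right)$ now yields $\tau_\beta=\phi'=\pm\,c^2b\cos(cs)/\!\left(c^2-b^2\sin^2(cs)\right)$, which is precisely \eqref{casei}. The converse is this chain reversed: from the prescribed $\tau_\beta(s)$ set $\phi=\int\tau_\beta(s)\,ds$, verify that $u=(\phi')^2=\cosh^2\phi\,(c^2+a^2\cosh^2\phi)$ and hence solves the ODE above, and conclude by Theorem~\ref{SScteo} that $\gamma$ is a spherical curve of radius $a/c^2$. Cases (ii) and (iii) go through verbatim, with $(\cosh\phi,\sinh\phi)$ replaced by the functions forced by \eqref{Faofb2a} and \eqref{Faofb3a} and by the appropriate clause of Theorem~\ref{SScteo}, which is why the $\sin(cs),\cos(cs)$ of case (i) become the $\sinh(cs),\cosh(cs)$ of the statement. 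The one genuinely delicate point is the sign bookkeeping: each causal type of $\gamma$ (and of $\beta$) injects its own signs through the Frenet equations \eqref{Ffnn} and through $D_{\gamma},D^{*}_{\gamma}$, so one must keep careful track of which clause of Theorem~\ref{SScteo} is in force and which sign of the integration constant --- hence which pseudo-sphere --- is selected; once that is settled, the remaining quadrature is purely mechanical.
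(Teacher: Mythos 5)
Your overall strategy is the same as the paper's: you pass to the potential $\phi=\int\tau_{\beta}\,ds$ with $\kappa_{\gamma}=c\cosh\phi$, $\tau_{\gamma}=c\sinh\phi$, $\tau_{\beta}=\phi'$ (this is exactly \eqref{condition0}, i.e.\ Theorem~\ref{Sratio}(i)), reduce the spherical condition to an ODE for $\phi$, and finish by a quadrature. The one genuinely different ingredient is that you obtain the first integral $u=(\phi')^{2}=\cosh^{2}\phi\,(c^{2}+K\cosh^{2}\phi)$ by integrating the second-order ODE coming from the equality clause of Theorem~\ref{SScteo}, whereas the paper invokes the radius identity \eqref{Srcondition} directly; these are equivalent, and your route has the advantage of making the integration constant (hence the radius) appear rather than being assumed.

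There is, however, a concrete gap at the step where you fix the sign of $K$. Your own computation gives $\left(\frac{1}{\kappa_{\gamma}}\right)^{2}-\left[\frac{1}{\tau_{\gamma}}\left(\frac{1}{\kappa_{\gamma}}\right)'\right]^{2}=-K/c^{4}$. In cases (i) and (ii) the curve $\gamma$ has \emph{space-like} principal normal (that is why $\beta$ is space-like in Theorem~\ref{TeoS}), so the relevant inequality clause is the one in \eqref{Sctype1}, which demands that this quantity be \emph{positive}; it therefore forces $K<0$, not $K=a^{2}>0$ as you assert. With $K=-a^{2}$ the first integral becomes $(\phi')^{2}=\cosh^{2}\phi\,(c^{2}-a^{2}\cosh^{2}\phi)$, which requires $a\le c$ and whose quadrature produces a formula involving $\sqrt{c^{2}-a^{2}}$ rather than \eqref{casei}; with $K=+a^{2}$ your quadrature is carried out correctly and does yield \eqref{casei}, but then the curve satisfies the \emph{reversed} inequality (it lies on a pseudo-hyperbolic plane of ``radius'' $a/c^{2}$ rather than on $S^{2}_{1}$), so the appeal to Theorem~\ref{SScteo}(1) does not close the argument. (To be fair, the paper has the same tension: substituting \eqref{condition0} into \eqref{Srcondition} actually gives $\tau_{\beta}=\pm\cosh^{2}\phi\sqrt{c^{2}\cosh^{-2}\phi-a^{2}}$, not the displayed $\pm\cosh^{2}\phi\sqrt{a^{2}+c^{2}\cosh^{-2}\phi}$, and the curve determined by \eqref{casei} has $\left(\frac{1}{\kappa_{\gamma}}\right)^{2}-\left[\frac{1}{\tau_{\gamma}}\left(\frac{1}{\kappa_{\gamma}}\right)'\right]^{2}=-a^{2}/c^{4}<0$.) A correct write-up must either change the clause of Theorem~\ref{SScteo} being invoked (and say on which quadric $\gamma$ lies) or change the sign of $K$ and redo the quadrature. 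Separately, your justification of $a\ge c$ (``a curve on a sphere of radius $r$ has curvature at least $1/r$'') is a Euclidean fact that does not transfer as stated to curves on $S^{2}_{1}$ or $H^{2}$ in $\mathbb{E}^{3}_{1}$, where the relation $\kappa^{2}=\kappa_{g}^{2}\pm\kappa_{n}^{2}$ carries a sign; this point needs an actual argument.
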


\begin{proof}
The proof of this theorem will be proved only for the natural mates $\left\{\gamma,\beta \right\}$ being 
a space-like pair of \textit{type1}. The other cases can be proved by the same way. 
Now, let the curvature of the space-like natural mate $\beta$ of $\gamma$, $\kappa_{\beta}$ be a constant 
$c>0$ and $\tau_{\beta}$ be given in \eqref{casei}. Also, we have from Case (i) in Theorem \eqref{Sratio} 
\begin{equation} \label{condition0}
\tau_{\gamma}=c \sinh \left(\int{\tau_{\beta}(s)ds}\right),\quad \kappa_{\gamma}=c \cosh \left(\int{\tau_{\beta}(s)ds}\right), 
\end{equation} 
which gives 
\begin{subequations} \label{ScconditionALL}
\begin{eqnarray} \label{Scconditiona}
\left(\frac{1}{\kappa_{\gamma}}\right)^{\prime}=-\frac{\tau_{\beta} \tanh \left(\int{\tau_{\beta}(s)ds}\right)}{c \cosh \left(\int{\tau_{\beta}(s)ds}\right)},\\ \label{Scconditionb}
\frac{1}{\tau_{\gamma}}=\frac{1}{c \sinh \left(\int{\tau_{\beta}(s)ds}\right)}.
\end{eqnarray}
\end{subequations}
Now, as 
\begin{equation} \label{Stanh}
\tanh \left(\pm \int{\frac{c^2 \sqrt{a^2+c^2}\cos (cs)}{c^2-(a^2+c^2){\sin^2(cs)}} ds}\right)= \pm \frac{\sqrt{a^2+c^2}}{c} \sin(cs).
\end{equation}
Differentiatng the above equation and using \eqref{casei}, we get 
\begin{equation} \label{Ssech}
\cosh^{-2} \left(\int{\tau_{\beta}(s)ds}\right)=1-\left[\left(\frac{a}{c}\right)^2+1\right] \sin^2(cs).
\end{equation}
On the other hand, by using \eqref{casei} and \eqref{Ssech} in multiplication of 
\eqref{Scconditiona} and \eqref{Scconditionb}, we find $\displaystyle \left(\frac{1}{\kappa_{\gamma}}\right)^{\prime} \frac{1}{\tau_{\gamma}}= \mp c^{-2} \sqrt{a^2+c^2} \cos(cs)$ which yields 
\begin{equation} \label{Sccondition}
\left[\left(\frac{1}{\kappa_{\gamma}}\right)^{\prime} \frac{1}{\tau_{\gamma}} \right]^{\prime}=\pm c^{-1} \sqrt{a^2+c^2} \sin(cs).
\end{equation}
Now, we find by considering \eqref{Stanh} and \eqref{casea} together
\begin{equation} \label{Scratiocondition}
\frac{\tau_{\gamma}}{\kappa_{\gamma}}= \tanh \left(\int{\tau_{\beta}(s)ds}\right)= \pm \frac{\sqrt{a^2+c^2}}{c} \sin(cs).
\end{equation}
Finally, by considering \eqref{Sccondition} and \eqref{Scratiocondition} in \eqref{Sctype1}, we conclude $\gamma$ 
is a space-like spherical curve of \textit{type1}.

Conversely, assume that $\gamma$ is a spherical curve and its natural mate $\beta$ has constant curvature 
$\kappa_{\beta}=c>0$. Then, the curvature $\kappa_{\gamma}$ and torsion $\tau_{\gamma}$ satisfy
\begin{equation} \label{Srcondition}
\left(\frac{1}{\kappa_{\gamma}}\right)^{2}-\left[{\left(\frac{1}{\kappa_{\gamma}}\right)^{\prime}}{\frac{1}{\tau_{\gamma}}}\right]^{2}=r^2,
\end{equation}
where $r>0$ being constant, (see in \cite{PP1999}). Thus, by substituting \eqref{condition0} 
and \eqref{ScconditionALL} in the last equation, we obtain 
\begin{equation} \nonumber
\tau_{\beta} {\cosh \left(\int{\tau_{\beta}(s)ds}\right)}^{-1}= \pm \cosh \left(\int{\tau_{\beta}(s)ds}\right) \sqrt{a^2+c^2 \cosh^{-2} \left(\int{\tau_{\beta}(s)ds}\right)}
\end{equation}
where $a$ is a constant such that $a\geq c$ and $a=r c^2$. Thus we have
\begin{equation} \nonumber
\frac{c \tau_{\beta} {\cosh \left(\int{\tau_{\beta}(s)ds}\right)}^{-2}}{\sqrt{a^2+c^2 {\cosh \left(\int{\tau_{\beta}(s)ds}\right)}^{-2}}}= \pm c,
\end{equation}
which yields by integrating 
\begin{equation} \nonumber
\arcsin \Big(\frac{c}{a^2+c^2} \tanh \left(\int{\tau_{\beta}(s)ds}\right)\Big)= \pm cs+c_0
\end{equation}
for some constant $c_0$. Therefore, we get
\begin{equation} \nonumber
\tanh \left(\int{\tau_{\beta}(s)ds}\right)= \pm \frac{a^2+c^2}{c} \sin(cs),
\end{equation}
after by applying a suitable translation in $s$. The last equation is equivalent to 
\begin{equation} \nonumber
\int{\tau_{\beta}(s)ds}= \pm \tanh^{-1} \Big( \frac{a^2+c^2}{c} \sin(cs)\Big).
\end{equation}
Finally, by differentiating the above equation yields \eqref{casei}.
\end{proof}

\begin{thm}
Let $\gamma$ be a time-like Frenet curve and its natural mate $\beta$ be of constant curvature $c>0$. 
Then $\gamma$ is a spherical curve if and only if there is a constant $a\geq c$ such that the torsion
 $\tau_{\beta}$ of $\beta$ with respect to a suitable arc-length parametrization $s$ satisfies one of the following:
\begin{itemize}
\item[(i)] If $\left|\kappa_{\gamma}\right|>\left|\tau_{\gamma}\right|$,
\begin{equation} \label{caseii}
\tau_{\beta}= \pm \frac{c^2 \sqrt{a^2+c^2}\sin (cs)}{c^2-(a^2+c^2){\cos^{2}(cs)}}.
\end{equation}
In case, $\beta$ is a space-like curve of \textit{type1}.
\item[(ii)] If $\left|\kappa_{\gamma}\right|<\left|\tau_{\gamma}\right|$,
\begin{equation}
\tau_{\beta}= \pm \frac{c^2 \sqrt{a^2-c^2}\cosh (cs)}{c^2-(a^2+c^2){\sinh(cs)}^2}.
\end{equation}
In case, $\beta$ is a space-like curve of \textit{type2}.
\end{itemize}
\end{thm}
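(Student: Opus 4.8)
The plan is to mirror the proof of the preceding theorem, replacing the spherical criterion for space-like curves by Theorem~\ref{TScteo} for time-like curves and replacing the ratio relations for a space-like $\gamma$ by the corresponding ones for a time-like $\gamma$ (namely $\tau_{\gamma}/\kappa_{\gamma}=\tanh\!\big(\int\tau_{\beta}\,ds\big)$ or $\coth\!\big(\int\tau_{\beta}\,ds\big)$, according as $\left|\kappa_{\gamma}\right|\gtrless\left|\tau_{\gamma}\right|$). As in that proof it suffices to treat one case — say case~(i), where $\left|\kappa_{\gamma}\right|>\left|\tau_{\gamma}\right|$ and $\beta$ is the associated space-like natural mate — the other case following verbatim after interchanging the trigonometric and hyperbolic functions of $cs$ and using the \textit{type2} entries of Theorem~\ref{TeoT}.

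For the ``if'' direction, I would start from $\kappa_{\beta}=c$, which via the first expressions in Theorem~\ref{TeoT} gives $\kappa_{\gamma}^{2}-\tau_{\gamma}^{2}=c^{2}$, hence $\tau_{\beta}=\tau_{\gamma}'/\kappa_{\gamma}$ and, by the earlier ratio theorem, $\tau_{\gamma}=c\sinh\!\big(\int\tau_{\beta}\,ds\big)$, $\kappa_{\gamma}=c\cosh\!\big(\int\tau_{\beta}\,ds\big)$. The next, and key, step is the integration identity
\[
\tanh\!\Big(\mp\!\int\frac{c^{2}\sqrt{a^{2}+c^{2}}\,\sin(cs)}{c^{2}-(a^{2}+c^{2})\cos^{2}(cs)}\,ds\Big)=\pm\frac{\sqrt{a^{2}+c^{2}}}{c}\,\cos(cs),
\]
which is verified simply by differentiating the right-hand side; that derivative reproduces \eqref{caseii}, and differentiating once more yields $\cosh^{-2}\!\big(\int\tau_{\beta}\,ds\big)=1-\big[(a/c)^{2}+1\big]\cos^{2}(cs)$. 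Feeding these into $\big(1/\kappa_{\gamma}\big)'$ and $1/\tau_{\gamma}$, one finds after simplification that $\big(1/\kappa_{\gamma}\big)'\,(1/\tau_{\gamma})$ is a constant multiple of $\sin(cs)$, so $\big[\big(1/\kappa_{\gamma}\big)'(1/\tau_{\gamma})\big]'$ is the corresponding multiple of $\cos(cs)$; comparing this with $\tau_{\gamma}/\kappa_{\gamma}$, which by the identity above equals $\pm\frac{\sqrt{a^{2}+c^{2}}}{c}\cos(cs)$, shows that $(\kappa_{\gamma},\tau_{\gamma})$ satisfies the characterization of Theorem~\ref{TScteo}, i.e.\ $\gamma$ is a time-like spherical curve.

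For the ``only if'' direction, assume $\gamma$ is a time-like spherical curve and $\kappa_{\beta}=c$. Then $(\kappa_{\gamma},\tau_{\gamma})$ satisfies a first integral of \eqref{TSc}, a relation of the form $\big(1/\kappa_{\gamma}\big)^{2}\pm\big[\big(1/\kappa_{\gamma}\big)'(1/\tau_{\gamma})\big]^{2}=r^{2}$ with $r>0$ constant (the time-like analogue of the radius relation used in the previous proof). Substituting $\kappa_{\gamma}=c\cosh\!\big(\int\tau_{\beta}\,ds\big)$, $\tau_{\gamma}=c\sinh\!\big(\int\tau_{\beta}\,ds\big)$ and the resulting $\big(1/\kappa_{\gamma}\big)'$ turns this into a separable first-order ODE for $\int\tau_{\beta}(s)\,ds$; integrating it produces an $\arcsin$-relation which, after a suitable translation in $s$, gives $\int\tau_{\beta}(s)\,ds=\pm\tanh^{-1}\!\big(\tfrac{\sqrt{a^{2}+c^{2}}}{c}\cos(cs)\big)$ with $a=rc^{2}$, and differentiating recovers \eqref{caseii}. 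The requirement $a\ge c$ is exactly what keeps the argument of $\tanh^{-1}$ admissible, equivalently what makes the radius $r=a/c^{2}$ attainable. Case~(ii) is obtained by the same chain of computations with $\coth$ in place of $\tanh$ and $\cosh(cs),\sinh(cs)$ in place of $\cos(cs),\sin(cs)$.

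The main obstacle is not conceptual but bookkeeping: as in the space-like theorem, the ``after simplification'' step — verifying the identity of Theorem~\ref{TScteo} for the explicit pair $(\kappa_{\gamma},\tau_{\gamma})$ — is a moderately long manipulation of hyperbolic and trigonometric identities, and one must be careful to (a) use the sign of $\kappa_{\gamma}^{2}-\tau_{\gamma}^{2}$ to select the correct causal \textit{type} of $\beta$ in Theorem~\ref{TeoT}, (b) use the matching branch ($\tanh$ versus $\coth$) of the ratio theorem, and (c) track the $\pm$ choices consistently between the two directions. Getting this sign and type bookkeeping right is the only real pitfall.
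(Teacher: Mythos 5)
Your proposal is correct and takes exactly the approach the paper intends: the paper's own ``proof'' of this theorem is the single sentence that it can be made exactly as in the preceding (space-like) theorem, and your adaptation --- swapping in Theorem~\ref{TScteo}, the time-like ratio relations, and the $\cos(cs)/\sin(cs)$ roles, with the attendant sign and causal-type bookkeeping --- is precisely that argument spelled out.
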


\begin{proof}
Proof of this theorem can be made exactly similar to the proof of the previous theorem.
\end{proof}

\vskip 5mm

\end{document}